\numberwithin{equation}{section}
\newtheorem{example}{Example}[section]
\newtheorem{lemma}[example]{Lemma}
\newtheorem{remark}[example]{Remark}
\newtheorem{theo}[example]{Theorem}
\def\elle#1{L^{#1}}
\def\BV{BV((0,1),\{-1,1\})}
\def\e{\varepsilon}
\def\semin#1#2{ [#1 ]_{#2}}
\def\R{\mathbb{R}}
\newcommand{\abs}[1]{\left|#1\right|}
\DeclareRobustCommand{\rchi}{{\mathpalette\irchi\relax}}
\newcommand{\irchi}[2]{\raisebox{\depth}{$#1\chi$}} 
\title[Non-local phase transitions close to $s=\frac12$]{Analysis for non-local phase transitions close to the critical exponent $s=\frac12$}
\author{Marco Picerni
\orcidlink{0009-0004-4364-4831}}\email{mpicerni@sissa.it}
\address{SISSA, via Bonomea 265, 34136, Trieste, Italy}
\begin{document}

\maketitle
\section*{Abstract}
We analyze the behaviour of double-well energies perturbed by fractional Ga\-gliar\-do squared seminorms in $H^s$ close to the critical exponent $s=\frac12$. This is done by computing a scaling factor $\lambda(\varepsilon,s)$, continuous in both variables, such that
\[
\mathcal{F}^{s_\varepsilon}_\varepsilon(u)=\frac{\lambda(\varepsilon,s_\varepsilon)}{\varepsilon}\int W(u)\,dt+\lambda(\varepsilon,s_\varepsilon)\varepsilon^{(2s_\varepsilon-1)^+}\semin{u}{{s_\varepsilon}}^2
\]
$\Gamma$-converge, for any choice of $s_\varepsilon \to \frac12$ as $\e\to 0$, to the sharp-interface functional found by Alberti, Bouchitt\'e and Seppecher in \cite{ABS} with the scaling ${|\log\e|^{-1}}$. Moreover, we prove that all the values $s\in [\frac12,1 )$ are regular points for the functional $\mathcal{F}^{s}_\varepsilon$ in the sense of equivalence by $\Gamma$-convergence (see \cite{BT-GammaExp}), and that the $\Gamma$-limits as $\e\to 0$ are continuous with respect to $s$. In particular, the corresponding surface tensions, given by suitable non-local optimal-profile problems, are continuous on $[\frac12,1)$.

{\bf MSC codes:} 35B25, 35G20, 35R11, 49J45, 74A50, 82B26

{\bf Keywords:} Non-convex energies, fractional Sobolev spaces, $\Gamma$-convergence, phase transitions, Cahn-Hilliard functional, singular perturbations.

\section{Introduction and outline}\def\e{\varepsilon}
Singular perturbations are often used to select solutions of non-convex problems with multiplicity of minimizers. In the case of theories of phase transitions, usually the non-convex problem at hand is an integral depending on a scalar variable $u$ through a ``double-well potential''  $W$ (that is, a function with two minimizers $\alpha$ and $\beta$). In the classical Cahn--Hilliard theory of phase transitions \cite{CahnHilliard} such an energy is singularly perturbed by a term with the gradient of $u$ depending on a small parameter $\e$ as
\[
\int_\Omega W(u)\,dt+ \varepsilon^{2}\int_\Omega|\nabla u|^2dx.
\] As $\e\to 0$, minimizers under a volume constraint converge to a function taking only the values $\alpha$ and $\beta$, and such that the interface between these two phases is minimal. This minimal-interface criterion had been conjectured by Gurtin \cite{Gurtin} and proven by Modica \cite{Modica} using the $\Gamma$-convergence (cf. \cite{DalMasoBook}, \cite{BraidesBook}) results of a previous seminal paper by Modica and Mortola \cite{ModicaMortola}, obtaining that the functionals above, scaled by $\frac1\e$, $\Gamma$-converge to a functional defined on the space $BV(\Omega;\{\alpha,\beta\})$ of functions with bounded variation taking only the values $\alpha$ and $\beta$ by
\[
m_W \hbox{\rm Per}(\{u=\alpha\};\Omega),
\] 
where Per$(A;\Omega)$ denotes the perimeter of $A$ in $\Omega$ and $m_W$  (the {\em surface tension} between the phases) is a constant determined by $W$ only. Since functions in $BV(\Omega;\{\alpha,\beta\})$ can be identified with sets of finite perimeter $\{u=\alpha\}$, this result provides a proof of the minimal-interface criterion.
It is interesting that this result is one-dimensional in that $m_W$ is characterized by the problems
\[
m_W =\min\Bigl\{\int_{-\infty}^{+\infty} (W(v)+|\nabla v|^2)\,dt: v(-\infty)=\alpha,\  v(+\infty)=\beta
\Bigr\}
\] 
(see \cite{FonsecaTartar}). The result in \cite{Modica} has been extended in many ways, and in particular using higher-order gradients as in \cite{FonsecaMantegazza} and \cite{BDS-higherorder}, with analogous formulas for the corresponding surface tension, depending on the order of the perturbation.
 
Motivated by an interest in non-local problems, perturbations have also been taken in fractional Sobolev spaces,
with functionals of the form
\begin{equation}\label{F-e-s}
\int_\Omega W(u)\,dx+ \varepsilon^{2s}\semin{u}{{s}}^2
\end{equation}
where $\semin{u}{s}$ denotes the Gagliardo seminorm in the fractional Sobolev space $H^{s}(\Omega)$ with $s\in(0,1)$ (see the works of Alberti, Bouchitt\'e, and Seppecher \cite{ABS} \cite{ABS-full}, Savin and Valdinoci \cite{SVa-nonlocal-ext} and Palatucci-Vincini \cite{PVi-GammaLim-s01}).  If $s>\frac12$ then the scaling by $\frac1\e$ as in the Modica--Mortola case gives functionals
\[
\frac1\e\int_\Omega W(u)\,dx+ \varepsilon^{2s-1}\semin{u}{{s}}^2\,,
\]
which still provide a $\Gamma$-limit of perimeter type as above, with a surface tension $m_s$ depending on $s$. Recently, the result has been proved to hold also for higher-order fractional perturbations by Solci \cite{Solci-higherorder}, who also shows that the functionals can be slightly modified in such a way that the corresponding surface tension $m_s$ is a continuous function for $s\in(\frac12,+\infty)$. A modification is necessary by the critical behaviour of the fractional norms at integer points.

The case $s=\frac12$ is {\em critical}, in the sense that the scaling by $\frac1\e$ makes the coefficient in front of the $H^{\frac12}$-seminorm equal to $1$, so that, in order to obtain a phase-transition energy of perimeter type, in this case it is necessary to scale by a further logarithmic factor; that is, to consider functionals
\[
\frac1{\e|\log\e|}\int_\Omega W(u)\,dx+ \frac1{|\log\e|}\semin{u}{{\frac12}}^2\,.
\]
With this scaling, the $\Gamma$-limit is still an energy of perimeter type with the explicit surface tension $m_{\frac12}=8$. 

In the case $s<\frac12$, finally, Savin and Valdinoci have shown that the functionals scaled by $\e^{-2s}$ have a non-local phase-transition limit in which the domain are $H^s$ functions taking only the values $\alpha$ and $\beta$.

\smallskip

In this paper we aim at describing more in detail the behaviour of functionals \eqref{F-e-s} for $s$ {\em close} to $\frac12$. To that end, we introduce functionals
\begin{equation}\label{intro:funzionale}
    F_\varepsilon^s(u) = \frac{1}{\varepsilon} \int_{(0,1)} W(u) dt + \varepsilon^{(2s-1)^+} \iint_{(0,1) \times (0,1)} \frac{\abs{u(x)-u(y)}^2}{\abs{x-y}^{1+2s}} dx dy.
\end{equation}
Here, $W$ is a double-well potential satisfying the following assumptions:
\begin{itemize}
    \item $W$ is continuous;
    \item $W \geq 0$ and $W(z) = 0$ if and only if $z = \pm1$;
    \item $\liminf\limits_{z\to\pm\infty}W(z)>0$.
\end{itemize}
Using the approach by \cite{BT-GammaExp}, in order to provide a description as $s\to\frac12$ and $\e\to0$,  
we investigate the behaviour of the functionals $F_\varepsilon^s$ under the assumption that $s = s_\varepsilon \to \frac{1}{2}$ as $\varepsilon \to 0$. 
The question then translates in finding a scaling factor $\lambda(\varepsilon,s)$ depending continuously on its variables such that
\[
\lambda(\varepsilon,s_\varepsilon) F_\varepsilon^{s_\varepsilon} \xrightarrow{\Gamma} F_0^{\frac{1}{2}}
\]for all choices of $s_\e\to \frac12$.

Note that a particular case is $s_\varepsilon=\frac{1}{2}$ for all $\e$,
for which, in this one-dimensional case, the asymptotic result is that
\begin{equation}\label{def_F0_12}
    \Gamma\hbox{-}\lim_{\e\to 0} 
\frac{1}{\abs{\log(\varepsilon)}} F_\varepsilon^{\frac{1}{2}}= F_0^{\frac{1}{2}} =
\begin{cases}
    m_\frac12 \#S(u) & \text{if } u \in \BV, \\
    +\infty & \text{otherwise}.
\end{cases}
\end{equation}
with $m_\frac12=8$.
This remark implies that we may take  $\lambda(\varepsilon,\frac12)= \frac1{|\log\e|}$. Note that if the wells of $W$ are in two points $\alpha<\beta$, the surface tension $m_\frac12$ is equal to $2(\beta-\alpha)^2$. All the results we prove in this paper also hold in such generality.

\smallskip
In Sections \ref{sec:scaling} and \ref{sec:gammalim} we prove that such a scaling is 
\[
\lambda(\varepsilon,s)=\begin{cases}\frac{2s-1}{1-{\varepsilon}^{2s-1}} &\hbox{ if } s>\frac12\\
\frac1{|\log\e|} &\hbox{ if } s=\frac12\\
\frac{2s-1}{{\varepsilon}^{\frac{1-2s}{2s}}-1}&\hbox{ if } s<\frac12.
\end{cases}
\]
In terms of asymptotic behaviour, in particular, this implies that in the regime 
$$
\abs{2s_\e-1}<<\frac1{|\log\e|},
$$
we have a {\em separation of scales} effect; that is, the $\Gamma$-limit of $\lambda(\varepsilon,s) F_\varepsilon^{s}$ is the same as the one obtained by first letting $s\to \frac12$ with $\e>0$ fixed, which gives  $\frac1{|\log\e|} F_\varepsilon^{1/2}$, and then letting $\e\to 0$. 

Furthermore, in Section \ref{sec:ms} we show that this analysis extends to $s_\e\to s\in (\frac12,1)$ by studying the behaviour of the surface tensions 
\begin{equation}\label{intro:probms}
    m_s = \inf \left\{ \int_{\R} W(u) dt + \iint_{\R \times \R} \frac{\abs{u_\varepsilon(x)-u_\varepsilon(y)}^2}{\abs{x-y}^{1+2s}} dx dy \mid u \in H^s(\R), u(\pm \infty) = \pm 1 \right\},
\end{equation}
as $s\to\frac12$ and proving that 
\[\lim_{s\to\frac12^+}(2s-1)m_s=m_\frac12\,.\]
This condition gives a continuity of the description by the scaled functionals. Indeed, since for $s>\frac12$ and $s_\e\to s$ we have
\[
\lambda(\varepsilon,s_\e)=\frac{1-2s_\e}{{\varepsilon}^{2s_\e-1}-1}\to 2s-1
\]
as $\e\to 0$, we obtain that
\[ \Gamma\hbox{-}\lim_{\e\to 0} 
\lambda(\varepsilon,s_\varepsilon)
F_\varepsilon^{s_\varepsilon} =
\begin{cases}
    (2s-1)m_{s} \#S(u) & \text{if } u \in \BV, \\
    +\infty & \text{otherwise},
\end{cases}
\]
which tends to $F_0^{1/2}$ as $s\to\frac12^+$. Again, we highlight a separation of scales effect  if $s_\e\to\frac12^+$ and 
$$
2s_\e-1>>\frac1{|\log\e|}.
$$
In this case, the $\Gamma$-limit of $\lambda(\varepsilon,s) F_\varepsilon^{s}$ is the same as the one obtained first letting $\e\to 0$ with $s> \frac12$ fixed, which gives  $(2s-1)\frac{m_s}{m_{1/2}}F_0^{1/2}$, and then letting $s\to \frac12$. 

These results can be expressed in the terminology of $\Gamma$-expansions \cite{BT-GammaExp} 
as the equivalence of the functionals $F^s_\e$ and the functionals
\[
G^s_\e(u)=\begin{cases}(1-{\varepsilon}^{2s-1}) m_s F_0(u) & \text{if } \e>\frac12\\
|\log\e| m_{\frac12} F_0(u) & \text{if } \e=\frac12,\end{cases}
\]
where
\[
F_0 =
\begin{cases}
    \#S(u) & \text{if } u \in \BV \\
    +\infty & \text{otherwise},
\end{cases}
\]
for $s$ varying uniformly on compact sets of $[\frac12,1)$. We note that this result can be extended to all
compact subsets of  $[\frac12,+\infty)$ upon taking the correct extension to higher-order fractional perturbations as in \cite{Solci-higherorder}.

	\section{Finding the correct scaling factor}\label{sec:scaling}

        In this section, we consider a sequence $(s_\varepsilon)_\varepsilon$ such that
        $$\lim_{\varepsilon\to0}s_\varepsilon=\frac12.$$
        Our aim is to find a scaling factor $\lambda(\varepsilon)$ such that the functionals 
        \(\lambda(\varepsilon)F^{s_\varepsilon}_\varepsilon\),
        where $F^{s_\varepsilon}_\varepsilon$ are given by \eqref{intro:funzionale}, $\Gamma$-converge to $F_0^\frac12$ as $\varepsilon\to0$. If $s=\frac12$ then the result by Alberti, Bouchitt\'e and Seppecher gives $\lambda(\e)=\frac1{|\log\e|}$, so we can suppose $s_\e\neq \frac12$.
        
        Before beginning, we recall that the double-well potential $W$ satisfies the following assumptions:
            \begin{enumerate}
        \item\label{HP_W_1} $W$ is continuous;
        \item\label{HP_W_2} $W \geq 0$ and $W(z) = 0$ if and only if $z = \pm1$;
        \item\label{HP_W_3} $\liminf\limits_{z\to\pm\infty}W(z)>0$.
    \end{enumerate}

        Moreover, for \(s \in (0,1)\) and \(v \in H^s(0,1)\), we define the Gagliardo seminorm $\semin{v}{s}$ by
        \begin{equation}\label{def_Gagliardo}            
        \semin{v}{s}^2 = \iint_{(0,1)\times(0,1)} \frac{|v(x)-v(y)|^2}{|x-y|^{1+2s}}\,dx\,dy,
        \end{equation}
        and its localized version $\semin{v}{s}(E)$ on a Borel set \(E \subset (0,1)\) by
        \begin{equation}\label{def_Gagliardo_local}     
        \semin{v}{s}(E)^2 = \iint_{E\times E} \frac{|v(x)-v(y)|^2}{|x-y|^{1+2s}}\,dx\,dy.
        \end{equation}
            
		We start by dealing with the case $s_\e\to{\frac{1}{2}}^+$. Let $G_\e$ denote the unscaled functionals
		$$G_\e(u)=\frac1\e\int_{(0,1)}W(u)\,dt+\e^{2s_\e-1}\iint_{(0,1)\times(0,1)}\frac{\abs{u(x)-u(y)}^2}{\abs{x-y}^{1+2s_\e}}\,dx\,dy.$$
        
        We now give an argument leading to the computation of the correct scaling. For symmetry reasons, this is carried on for functionals defined on $(-1,1)$ and not on $(0,1)$. This allows to consider a sequence of functions 
		$u_\e$ which converges in measure to the function $$u_0=\begin{cases}
			1\quad\text{in }[0,1] \\
			-1\quad\text{in }[-1,0).
		\end{cases}$$
		We let $\eta\in(0,\frac14)$ and define 
        \[\sigma_\e=\abs{\{\abs{u_\e}\leq1-\eta\}},\quad C_\eta=\min_{\abs{z}\leq 1-\eta} W(z)>0,\]
		\[A_\e=\{u_\e>1-\eta\},\quad B_\e=\{u_\e<\eta-1\}.\]
		Then, we have the estimate:
		$$G_\e(u_\e)\geq \frac{\sigma_\e C_\eta }{\e} + 2\e^{2s_\e-1}\iint_{A_\e\times B_\e} \frac{\abs{u(x)-u(y)}^2}{\abs{x-y}^{1+2s_\e}}\,dx\,dy
        $$ 
		$$\geq\frac{\sigma_\e C_\eta }{\e} + 2(2-2\eta)^2 \e^{2s_\e-1}\iint_{A_\e\times B_\e} \frac{1}{\abs{x-y}^{1+2s_\e}}\,dx\,dy.$$
        Since the function $\psi(\abs{x-y})=\frac{1}{\abs{x-y}^{1+2s_\e}}$ is monotonically decreasing in $\abs{x-y}$, we obtain a lower bound by increasing the distance between $x$ and $y$ (see also Lemma 2 of \cite{ABS}). This leads to
		\[G_\e(u_\e)\geq\frac{\sigma_\e C_\eta }{\e} + 2(2-2\eta)^2 \e^{2s_\e-1} \int_{-1}^{-1+\abs{A_\e}} \int_{1-\abs{B_\e}}^{1} \frac{1}{\abs{x-y}^{1+2s_\e}}\,dx\,dy\]
		\[\begin{split}
		    =\frac{\sigma_\e C_\eta }{\e}+\frac{(2-2\eta)^2\e^{2s_\e-1}}{s_\e(2s_\e-1)}&\bigg(2^{1-2s_\e}-(2-\abs{B_\e})^{1-2s_\e}\\
            &-(2-\abs{A_\e})^{1-2s_\e}+(2-\abs{B_\e}-\abs{A_\e})^{1-2s_\e}\bigg),
		\end{split}\]
		which simplifies to
		$$\frac{\sigma_\e C_\eta }{\e} + C\e^{2s_\e-1}\frac{\sigma_\e^{1-2s_\e}-1}{2s_\e-1} + O(\e^{2s_\e-1})$$
        for some positive constant $C$ which does not depend on the other parameters.
        We now minimize the principal part with respect to $\sigma=\sigma_\e$. The minimum is attained for
		\begin{equation}\label{def_sigmaeps_minimo}
		    \sigma_\e =K \e\quad\text{with }K=\left(\frac{C}{C_\eta}\right)^\frac{1}{2s_\e},
		\end{equation}
		which leads to (possibly renaming $C$, which still does not depend on the other parameters)
		\begin{equation}\label{CalcoloScaling+}
		    G_\e(u_\e)\geq 
		C\left[ C_\eta^{1-\frac{1}{2s_\e}}  + \frac{\e^{2s_\e-1}}{2s_\e}\left(\frac{({K\e})^{1-2s_\e}-1}{2s_\e-1}\right)-1\right].
		\end{equation}
        Since $K^{1-2s_\e}\to1$ as $s_\e\to\frac12$, the leading term in the right-hand side is
        \[
        \frac{{\e}^{2s_\e-1}-1}{1-2s_\e}.
        \]
		This computation suggests the scaling factor
		\begin{equation}\label{Lambda+}
		    \lambda_+(\e)=\frac{1-2s_\e}{{\e}^{2s_\e-1}-1}.
		\end{equation}
        
        Note that, since we are interested in vanishing perturbations of the double-well functional, we ought to make sure that the scaling factor we have found behaves correctly (that is, we want the coefficient in front of the double well potential to diverge as $\e\to0$ and the one in front of the Gagliardo seminorm to tend to $0$).
        Let us check the validity of such conditions: in this case, the functional has the form
        $$F^{s_\e}_\e(u)=\frac{\lambda_+(\e)}{\e}\int_{(0,1)}W(u)\,dt+\lambda_+(\e)\e^{2s_\e-1}\semin{u}{s_\e}^2,$$
        thus, the conditions are
        $$\lambda_+(\e)\e^{2s_\e-1}\to0
        \quad\text{and}\quad
        \frac{\lambda_+(\e)}{\e}\to+\infty.$$
        Here, the first condition is equivalent to 
        $$0\leftarrow\frac{2s_\e-1}{\e^{1-2s_\e}-1}=\frac{(2s_\e-1)\abs{\log(\e)}}{e^{(2s_\e-1)\abs{\log(\e)}}-1}\cdot\frac{1}{\abs{\log(\e)}},$$
        which is satisfied, since the function $f(x)=\frac{e^x-1}{x}$ is bounded from below by a positive constant for all $x\in [0,+\infty)$.

    \begin{remark}\label{Cond1_also_s_small}
        The condition
        \[
        \lambda_+(\e)\e^{2s_\e-1}\to0
        \]
        is also satisfied if $s_\e<\frac12$. Indeed, if $\abs{\log(\e)}({2s_\e-1})$ is bounded the reasoning above still applies. 
        On the other hand, if $\abs{\log(\e)}({2s_\e-1})\to -\infty$, one has
        $$\frac{2s_\e-1}{e^{(2s_\e-1)\abs{\log(\e)}}-1}\to0.$$
        The reason why we separately study the cases $s_\e>\frac12$ and $s_\e<\frac12$ lies in the behaviour of the term in front of the double well potential, which may not diverge for $s<\frac{1}{2}$ with this scaling factor.
    \end{remark}
\noindent
    We now check the second condition, namely:
    $$\frac{\lambda_+(\e)}{\e}\to+\infty.$$
    This, after simplifications, leads to
    $$\frac{(2s_\e-1)\abs{\log(\e)}}{1-e^{(1-2s_\e)\abs{\log(\e)}}}\cdot\frac{1}{\e^{2s_\e}\abs{\log(\e)}}\to+\infty.$$
    Here, the first term is bounded from below whenever $\abs{\log(\e)}({2s_\e-1})$ is bounded, while $\frac{1}{\e^{2s_\e}\abs{\log(\e)}}\to+\infty$. On the other hand, if $\abs{\log(\e)}({2s_\e-1})\to+\infty$, both terms in the product tend to $+\infty$.

        If $s_\e\to{\frac{1}{2}}^-$, it suffices to repeat the same computation above, considering the unscaled functionals
        $$G_\e(u)=\frac1\e\int_{(0,1)}W(u)\,dt+\iint_{(0,1)\times(0,1)}\frac{\abs{u(x)-u(y)}^2}{\abs{x-y}^{1+2s_\e}}dx\,dy.$$
        In this case, the optimal value in the minimization procedure (performed as in \eqref{CalcoloScaling+}) is attained for 
		\begin{equation}
		    \sigma_\e =K \e^{\frac{1}{2s_\e}},
		\end{equation}
        with $K$ as in \eqref{def_sigmaeps_minimo},
        which leads to 
        \begin{equation}
            G_\e(u_\e)\geq 
        C\left[ C_\eta \e^{\frac{2s_\e-1}{2s_\e}}  + \frac{1}{2s_\e}
        \left(\frac{({K\e^\frac{1}{2s_\e}})^{1-2s_\e}-1}{2s_\e-1}\right)-1\right]
        \end{equation}
        for some positive constant $C$ which does not depend on the parameters. 
        We recall that the value $K$ satisfies $K^{1-2s_\e}\to1$ as $s_\e\to\frac12$. Hence, the leading term in the right-hand side is
        \[
        \frac{{\e}^{\frac{1-2s_\e}{2s_\e}}-1}{2s_\e-1}.
        \]
   This computation suggests the scaling factor
        \begin{equation}\label{Lambda-}
            \lambda_-(\e)=
        \frac{2s_\e-1}{{\e}^{\frac{1-2s_\e}{2s_\e}}-1}.
        \end{equation}

   \noindent In this case, the conditions become
    $$\lambda_-(\e)\to0
    \quad\text{and}\quad
    \frac{\lambda_-(\e)}{\e}\to+\infty.$$
    The first, being equivalent to 
    $$\frac{(2s_\e-1)\abs{\log(\e)}}{e^{(2s_\e-1)\abs{\log(\e)}}-1}\cdot\frac{1}{\abs{\log(\e)}}\to0,$$
    is satisfied by Remark \ref{Cond1_also_s_small}. On the other hand, the second condition leads to
    $$\frac{2s_\e-1}{\e({\e}^{\frac{1-2s_\e}{2s_\e}}-1)}\to+\infty,$$
    which is equivalent to
    $$\frac{2s_\e-1}{\e({\e}^{1-2s_\e}-1)}\to+\infty.$$
    Once again, we write it as
    $$+\infty\leftarrow\frac{2s_\e-1}{\e({\e}^{1-2s_\e}-1)}=\frac{(2s_\e-1)\abs{\log(\e)}}{e^{(2s_\e-1)\abs{\log(\e)}}-1}\cdot\frac{1}{\e\abs{\log(\e)}},$$
    where the first term in the product is bounded from below by a positive constant if $(2s_\e-1)\abs{\log(\e)}$ is bounded and tends to $+\infty$ if $(2s_\e-1)\abs{\log(\e)}\to-\infty$. The second term, on the other hand, always tends to $+\infty$, which implies that the second condition is satisfied for any regime of $s<\frac12$.
    
	\begin{remark}
		For $\e\in(0,1)$ fixed,
		$$\lim_{s\to{\frac{1}{2}}^+}\,\frac{1-2s}{\e^{2s-1}-1}\e^{2s-1} 
        =
        \lim_{s\to{\frac{1}{2}}^+}\,\frac{2s-1}{\e^{1-2s}-1}
        =
        \frac{1}{\abs{\log(\e)}}$$
        and
		$$\lim_{s\to\frac12^-}\frac{2s-1}{{\e}^{\frac{1-2s}{2s}}-1}
        =
        \frac{1}{\abs{\log(\e)}}.$$
		This allows us to recover the scaling factor for the critical regime $s=\frac12$.
	\end{remark}

	\section{The Gamma limit}\label{sec:gammalim}
	Let $(s_\e)_\e$ be a sequence converging to $\frac12$ as $\e\to0$ and consider the functionals
\begin{equation}\label{Functionals-Fe}
    F_\e(u)=
    \begin{cases}
        \displaystyle\frac{\lambda_+(\e)}{\e}\int_{(0,1)}W(u)\,dt+\lambda_+(\e)\e^{2s_\e-1}\semin{u}{s_\e}^2\quad \text{if } s_\e>\frac12\\\\
        \displaystyle\frac{1}{\abs{\log{\e}}\e}\int_{(0,1)}W(u)\,dt+\frac{1}{\abs{\log{\e}}}\semin{u}{s_\e}^2\quad \text{if } s_\e=\frac12\\   \\
        \displaystyle\frac{\lambda_-(\e)}{\e}\int_{(0,1)}W(u)\,dt+\lambda_-(\e)\semin{u}{s_\e}^2\quad \text{if } s_\e<\frac12\,,
    \end{cases}
\end{equation}
    with $\lambda_+(\e)$ and $\lambda_-(\e)$ being the scaling factors defined in \eqref{Lambda+} and \eqref{Lambda-} respectively and $\semin{v}{s_\e}$ being the Gagliardo seminorm defined in \eqref{def_Gagliardo}.

    In this section we prove that, as $\e\to0$, the functionals $F_\e$ $\Gamma$-converge to the functional $F^\frac12_0$ defined in \eqref{def_F0_12}. More precisely, we prove the following result.
	
	\begin{theo}\label{GammaLimite}
		Assume that $W$ is a double-well potential satisfying \eqref{HP_W_1}, \eqref{HP_W_2} and \eqref{HP_W_3}, let $\e>0$ and $s_\e\in(0,1)$ be such that $$\lim_{\e\to0} s_\e =\frac12\,.$$
		Then the family of functionals $(F_\e)_\e$ defined in \eqref{Functionals-Fe} $\Gamma$-converge, as $\e\to0^+$, to the functional 
		$$F_0=\begin{cases}
			m_\frac12\#S(u) \quad\text{if } u\in \BV\\
			+\infty\quad\text{otherwise,}\end{cases}$$
            with $m_\frac12=8$.
	\end{theo}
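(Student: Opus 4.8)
The plan is to prove the two standard $\Gamma$-convergence inequalities (the $\liminf$ lower bound, which also encodes the needed compactness, and the existence of a recovery sequence), organised so as to cover the regimes $s_\e>\tfrac12$, $s_\e=\tfrac12$ and $s_\e<\tfrac12$ simultaneously. Both inequalities pass to subsequences of the $\e$'s, so it is enough to argue along subsequences on which $s_\e$ always lies on one side of $\tfrac12$; along a subsequence with $s_{\e_k}\equiv\tfrac12$ the $F_\e$ are exactly the one-dimensional Alberti, Bouchitt\'e and Seppecher functionals recalled in \eqref{def_F0_12}, so there is nothing to do there, and it remains to treat $s_\e>\tfrac12$ (scaling $\lambda_+$) and $s_\e<\tfrac12$ (scaling $\lambda_-$), which are symmetric. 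I would work with convergence in measure on $(0,1)$.

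For the recovery sequence, given $u\in\BV$ with $S(u)=\{x_1,\dots,x_N\}$ I would take $u_\e$ equal to $u$ outside $\bigcup_j(x_j-\tfrac{\e}{2},x_j+\tfrac{\e}{2})$ and affine from $-1$ to $1$ on each such (disjoint, for $\e$ small) interval; then $u_\e\to u$ in measure. The potential term is $\tfrac N2\lambda_\pm(\e)\int_{-1}^1W\to0$ since $\lambda_\pm(\e)\to0$ (Section~\ref{sec:scaling}). For $\semin{u_\e}{s_\e}^2$ one decomposes $(0,1)^2$ into the $N$ transition squares (contributing $O(\e^{1-2s_\e})$, hence negligible after multiplication by $\lambda_\pm(\e)\e^{(2s_\e-1)^+}$), the layer/distant-plateau interactions (of lower order), and the $N$ interactions between adjacent plateaus of opposite value; each of the latter reproduces the model estimate of Section~\ref{sec:scaling}, leaving a ratio of the type $(1-(c\e)^{2s_\e-1})/(1-\e^{2s_\e-1})$ (and its $s<\tfrac12$ counterpart), which tends to $1$ in every regime of $(2s_\e-1)\abs{\log\e}$ by an elementary limit. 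Hence $\lambda_\pm(\e)\e^{(2s_\e-1)^+}\semin{u_\e}{s_\e}^2\to 8N$ and $F_\e(u_\e)\to 8\#S(u)=F_0(u)$.

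For the lower bound, let $u_\e\to u$ in measure with $\liminf_\e F_\e(u_\e)<\infty$. Because $\lambda_\pm(\e)/\e\to+\infty$ (Section~\ref{sec:scaling}), this forces $\int W(u_\e)\to0$, whence by \eqref{HP_W_2}–\eqref{HP_W_3} one has $u\in\{-1,1\}$ a.e.; if $u$ is not of this form the potential term alone diverges and the inequality is trivial. Assuming $u\in\{-1,1\}$ a.e., I would localise: for an interval $I=(x_0-r,x_0+r)$ across which $u$ has a single jump, set $\eta\in(0,\tfrac14)$, $C_\eta=\min_{\abs z\le1-\eta}W$, $A_\e=\{u_\e>1-\eta\}\cap I$, $B_\e=\{u_\e<\eta-1\}\cap I$ and $\sigma_\e=\abs{\{\abs{u_\e}\le1-\eta\}\cap I}$; then $\sigma_\e\to0$ and $\abs{A_\e},\abs{B_\e}\to r$. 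Using $\semin{u_\e}{s_\e}^2(I)\ge 2(2-2\eta)^2\iint_{A_\e\times B_\e}\abs{x-y}^{-1-2s_\e}$ and the rearrangement inequality of Lemma~2 of \cite{ABS} to push $A_\e$ and $B_\e$ to opposite ends of $I$, the localised energy is at least
\[
\lambda_\pm(\e)\Bigl[\frac{C_\eta}{\e}\sigma_\e+\frac{(2-2\eta)^2\,\e^{(2s_\e-1)^+}}{s_\e(2s_\e-1)}\bigl(\sigma_\e^{1-2s_\e}-(\abs{B_\e}+\sigma_\e)^{1-2s_\e}-(\abs{A_\e}+\sigma_\e)^{1-2s_\e}+\abs{I}^{1-2s_\e}\bigr)\Bigr].
\]
Estimating the bracket from below by its infimum over $\sigma_\e>0$ (attained at $\sigma_\e\approx\e$, resp.\ $\sigma_\e\approx\e^{\frac1{2s_\e}}$, as in \eqref{def_sigmaeps_minimo}–\eqref{CalcoloScaling+}) and inserting the explicit $\lambda_\pm$, the resulting quantity has $\liminf\ge 2(2-2\eta)^2$, hence $\ge 8$ on letting $\eta\to0$. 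Summing over disjoint intervals around distinct jumps, with the superadditivity of the potential and the Gagliardo energy on disjoint sets, gives at once $\#S(u)<\infty$ (so $u\in\BV$) and $\liminf_\e F_\e(u_\e)\ge 8\#S(u)$; and if $u\notin\BV$ one produces arbitrarily many such contributions, so $\liminf_\e F_\e(u_\e)=+\infty$.

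The step I expect to be the real obstacle is the limit in the displayed lower bound. After minimisation, the ``constant'' piece $\abs{I}^{1-2s_\e}-\abs{B_\e}^{1-2s_\e}-\abs{A_\e}^{1-2s_\e}$, divided by $2s_\e-1$, diverges like $(2s_\e-1)^{-1}$, and after multiplication by $\lambda_\pm(\e)\e^{(2s_\e-1)^+}$ it leaves (for $s_\e>\tfrac12$) a term $\sim-\e^{2s_\e-1}/(1-\e^{2s_\e-1})$, which in turn degenerates to an indeterminate form when $(2s_\e-1)\abs{\log\e}\to0$; one must verify that this degeneracy is cancelled precisely by the simultaneous blow-up of the leading $\sigma_\e^{1-2s_\e}$ term, so that the quotient tends to $8$ in each of the regimes $(2s_\e-1)\abs{\log\e}\to0$, $\to c\in(0,\infty)$, $\to+\infty$ — and symmetrically with $\lambda_-$ for $s_\e<\tfrac12$, where the two terms exchange roles. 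This is, in effect, the same cancellation that makes the choice of $\lambda_\pm$ in Section~\ref{sec:scaling} the correct one. A secondary technical point is the reduction, in the compactness part, of a general $\{-1,1\}$-valued $u\notin\BV$ to the locally-single-jump situation, together with the check that the localised lower bound is stable when ``single jump'' is replaced by ``$L^1$-close to a single jump''.
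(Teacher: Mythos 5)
Your proposal is correct and follows essentially the same route as the paper: an affine-transition recovery sequence with the same decomposition of the square, and a lower bound obtained by localizing near jumps, pushing $A_\e$ and $B_\e$ to opposite ends of the interval, and minimizing over $\sigma_\e$ — which is exactly the content of the paper's Lemma \ref{KeyLemma}, whose parameter $\theta$ also takes care of the ``$L^1$-close to a single jump'' stability and of the $u\notin\BV$ case via finitely many disjoint intervals carrying both phases. The cancellation you flag as the main obstacle does go through in every regime of $(2s_\e-1)\abs{\log\e}$ (the key quotient is $\bigl(Z^{1-2s_\e}-\e^{2s_\e-1}\bigr)/\bigl(1-\e^{2s_\e-1}\bigr)\to1$ since $Z^{1-2s_\e}-1=O(2s_\e-1)=o\bigl(1-\e^{2s_\e-1}\bigr)$), and is precisely what the paper verifies in Lemma \ref{KeyLemma} and Remark \ref{Remark_keylemma}.
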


    We now state a technical lemma, which will be used multiple times throughout this paper, which sums up the minimization technique applied in the previous section. Its proof can be found in the appendix.
    
    \begin{lemma}\label{KeyLemma}
            Let $I\subset [0,1]$ be an interval, $s_\e\in(0,1)$, $s_\e\neq\frac12$ and $v\in H^{s_\e}(0,1)$. For any $\eta\in(0,\frac14)$, define 
            \[C_\eta=\min_{\abs{z}\leq 1-\eta} W(z)>0\]
            and
            \begin{equation}\label{Z-keylemma}
                Z=\left(\frac{8(1-\eta)^2\abs{I}^{-2s_\e}}{2s_\e C_\eta}\right)^{\frac{1}{2s_\e}}.
            \end{equation}
            Moreover, assume that there exists $\theta\in(0,\frac12)$ such that
            \begin{equation}\label{eq:keylemma}
                \abs{I\cap\{v\geq 1-\eta\}}\geq \theta\abs{I}\quad \text{and}\quad
                \abs{I\cap\{v\leq\eta-1\}}\geq \theta\abs{I}.
            \end{equation}
        Then, for $s_\e>\frac12$,
		\begin{equation}\begin{split}
		    F_\e(v;I)\geq \lambda_+(\e)
		\Biggr[ C_\eta \abs{I}Z + 8 &(1-\eta)^2\frac{\e^{2s_\e-1}\abs{I}^{2s_\e-1}}{2s_\e}\\ &\left(\frac{Z^{1-2s_\e}\e^{1-2s_\e}-1}{2s_\e-1}-2\frac{1-\theta}\theta\right)\Biggr],
		\end{split}
		\end{equation}
        and, for $s_\e<\frac{1}{2}$,
		\begin{equation}
        \begin{split}
		    F_\e(v;I)\geq \lambda_-(\e)
		\Biggr[C_\eta\abs{I}\e^{\frac{1}{2s}-1}Z+ 8 &(1-\eta)^2\frac{\abs{I}^{2s_\e-1}}{2s_\e}\\ &\left(\frac{Z^{1-2s_\e}\e^\frac{{1-2s_\e}}{2s_\e}-1}{2s_\e-1}-2\frac{1-\theta}\theta\right)\Biggr].
        \end{split}
		\end{equation}
        \end{lemma}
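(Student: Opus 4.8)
The plan is to repeat, on the generic interval $I$, the two–step lower bound that produced the scalings in Section~\ref{sec:scaling}: bound the potential term on the transition set and the squared seminorm on the ``cross set'', then optimise the size of the transition set. Write $\ell=\abs I$ and split $I$, up to null sets, into
\[
A=I\cap\{v\ge 1-\eta\},\qquad B=I\cap\{v\le \eta-1\},\qquad T=I\setminus(A\cup B),
\]
which are pairwise disjoint since $\eta<\tfrac14$; put $a=\abs A$, $b=\abs B$, $\sigma=\abs T$, so $a+b+\sigma=\ell$ and, by \eqref{eq:keylemma}, $a,b\ge\theta\ell$, hence also $\ell-a=b+\sigma\ge\theta\ell$ and $\ell-b\ge\theta\ell$. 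On $T$ one has $\abs v<1-\eta$, so $W(v)\ge C_\eta$ there and $\int_I W(v)\,dt\ge C_\eta\sigma$. Keeping only the cross contribution in the seminorm and using $\abs{v(x)-v(y)}\ge 2(1-\eta)$ for $(x,y)\in(A\times B)\cup(B\times A)$,
\[
\semin{v}{s_\e}(I)^2\ \ge\ 8(1-\eta)^2\iint_{A\times B}\frac{dx\,dy}{\abs{x-y}^{1+2s_\e}}.
\]

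Next comes the rearrangement. Since $t\mapsto t^{-1-2s_\e}$ is decreasing, pushing $A$ and $B$ apart only lowers the last integral, and by \cite[Lemma~2]{ABS} its minimum over disjoint subsets of $I$ with the given measures is attained when $A$ and $B$ are the two extreme subintervals of $I$, with the gap of length $\sigma$ in between; integrating over that configuration and using $\ell-a=b+\sigma$, $\ell-b=a+\sigma$, $\ell-a-b=\sigma$ gives
\[
\iint_{A\times B}\frac{dx\,dy}{\abs{x-y}^{1+2s_\e}}
=\frac{\sigma^{1-2s_\e}+\ell^{1-2s_\e}-(\ell-a)^{1-2s_\e}-(\ell-b)^{1-2s_\e}}{2s_\e(2s_\e-1)}\,.
\]
To remove the dependence on $a,b$ I bound $(\ell-a)^{1-2s_\e}+(\ell-b)^{1-2s_\e}$ by $2(\theta\ell)^{1-2s_\e}$ using $\ell-a,\ell-b\ge\theta\ell$ and the monotonicity of $t\mapsto t^{1-2s_\e}$, and then apply the tangent–line estimate $\xi^{\,t}\le 1+t(\xi-1)$ (valid for $\xi\ge0$, $t\in[0,1]$, with $\xi=1/\theta$ and $t=2s_\e-1$ when $s_\e>\tfrac12$) together with its companion lower bound $\xi^{\,t}\ge 1+t(1-1/\xi)$ (with $\xi=\theta$ and $t=1-2s_\e$ when $s_\e<\tfrac12$), which convert $2(\theta\ell)^{1-2s_\e}$ into $2\ell^{1-2s_\e}\bigl(1+(2s_\e-1)\tfrac{1-\theta}\theta\bigr)$. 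Keeping track of the sign of $2s_\e-1$, both ranges yield the single estimate
\[
\semin{v}{s_\e}(I)^2\ \ge\ \frac{8(1-\eta)^2\,\ell^{\,1-2s_\e}}{2s_\e}\Bigl(\frac{(\sigma/\ell)^{1-2s_\e}-1}{2s_\e-1}-2\frac{1-\theta}\theta\Bigr).
\]

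Finally I combine, obtaining $F_\e(v;I)\ge\tfrac{\lambda_\pm(\e)}{\e}C_\eta\sigma+\lambda_\pm(\e)\,\e^{(2s_\e-1)^+}$ times the estimate above, and minimise the right–hand side over $\sigma\in(0,+\infty)$. Its $\sigma$–dependent part is
\[
\lambda_\pm(\e)\Bigl(\tfrac{C_\eta}{\e}\,\sigma+\e^{(2s_\e-1)^+}\tfrac{8(1-\eta)^2}{2s_\e(2s_\e-1)}\,\sigma^{1-2s_\e}\Bigr),
\]
which is strictly convex in $\sigma$ (the exponent $1-2s_\e$ has exactly the sign that makes $\sigma^{1-2s_\e}/(2s_\e-1)$ convex in both regimes), so its unique minimiser is the $\sigma_\e$ of Section~\ref{sec:scaling}: $\sigma=\e\abs IZ$ when $s_\e>\tfrac12$ and $\sigma=\e^{\frac1{2s_\e}}\abs IZ$ when $s_\e<\tfrac12$, with $Z$ as in \eqref{Z-keylemma}. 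At this value the first term equals $\lambda_\pm(\e)C_\eta\abs IZ$ (with the extra factor $\e^{\frac1{2s_\e}-1}$ when $s_\e<\tfrac12$), and the seminorm term, after substituting the definitions of $\lambda_\pm(\e)$ and $Z$, reorganises into the displayed combinations; since this is the infimum over all $\sigma>0$ it is a fortiori a lower bound for the value at the actual $\sigma=\abs T$, so no compatibility of $\sigma_\e$ with the admissible range is needed and the case $\semin{v}{s_\e}(I)=+\infty$ is trivial. The only genuinely delicate point is the removal of $a,b$: getting the sharp constant $2\tfrac{1-\theta}\theta$ — a coarser $\theta$–dependent quantity would still suffice for the $\Gamma$–limit — forces one to use the above tangent–line estimates at the exponent $\abs{2s_\e-1}$, and one has to keep track that for $s_\e<\tfrac12$ the vanishing of $(2s_\e-1)^+$ shifts the power of $\e$ appearing in $\sigma_\e$, hence in the final expression.
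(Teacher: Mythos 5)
Your proof is correct and follows essentially the same route as the paper's: the same decomposition of $I$ into $A$, $B$ and the transition set, the same rearrangement of $A$ and $B$ to the endpoints of $I$ via \cite[Lemma 2]{ABS}, the same elimination of $\abs A,\abs B$ to produce the $-2\frac{1-\theta}{\theta}$ correction (you use the tangent-line inequality for $\xi\mapsto\xi^t$ where the paper uses monotonicity in $s$ of the difference quotient $\frac{\theta^{1-2s}-1}{2s-1}$ — the same elementary fact), and the same convex minimisation over $\sigma$ yielding $Z$. Your explicit remark that the minimisation gives an unconditional lower bound, so no admissibility check on $\sigma_\e$ is needed, is a welcome clarification that the paper leaves implicit.
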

        \begin{remark}\label{Remark_keylemma}
        In both cases, for any $\theta$ and $\eta$ fixed, there exists a value $\overline \e > 0$ such that the right-hand side is bounded from below by a positive constant $C_0(\theta,\eta)$ for all $\e \in (0, \overline \e)$. Moreover, we observe that this constant tends to $-\infty$ as $\theta \to 0^+$.
        \end{remark}
	
Before proceeding to the proof of Theorem \ref{GammaLimite} we state and prove the corresponding compactness result. Its proof follows the technique employed in \cite{SVa-nonlocal-ext}.

Since the case $s_\e=\frac12$ has already been addressed in \cite{ABS}, we will only prove the theorem in the case $s_\e\neq\frac12$. Moreover, when the cases $s_\e>\frac12$ and $s_\e<\frac12$ can be handled using the same arguments, we will use the notation $\lambda_\pm(\e)$ to refer to either $\lambda_+(\e)$ or $\lambda_-(\e)$ without further specification on $s_\e$. 

Given any open subset $E\subset(0,1)$, we define the restricted energy
	$$F_\e(u;E)=
        \begin{cases}
            \displaystyle\frac{\lambda_+(\e)}{\e}\int_{E}W(u)\,dt+\lambda_+(\e)\e^{2s_\e-1}\semin{u}{{s_\e}}(E)^2\quad \text{if } s_\e>\frac12\\
            \\\displaystyle\frac{\lambda_-(\e)}{\e}\int_{E}W(u)\,dt+\lambda_-(\e)\semin{u}{{s_\e}}(E)^2\quad \text{if } s_\e<\frac12,
        \end{cases}
    $$
    where $\semin{u}{{s_\e}}(E)$ denotes the localized version of the Gagliardo seminorm, as in \eqref{def_Gagliardo_local}.

    We also introduce the notation $\mathcal{T}_k$ for the truncation function at height $\pm k$ (which will be used in the proof of the next lemma). More precisely, $\mathcal{T}_k$ is defined as follows:
        \begin{equation}\label{DefTroncate}
            \mathcal{T}_k(s)=\begin{cases}
            k\operatorname{sgn}(s) \quad\text{if } \abs{s}\geq k\\
            s\quad \text{otherwise.}
        \end{cases}
        \end{equation}
    
	\begin{lemma}\label{compactness1}
		Assume that $$\sup_{\e>0}F_\e(u_\e)\leq S<+\infty.$$
		Then there exists $u\in\BV$ such that, up to subsequences, $u_\e\to u$ in measure.
	\end{lemma}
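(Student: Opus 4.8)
\textbf{Plan for the proof of Lemma \ref{compactness1}.}
The strategy is a classical Modica--Mortola-type compactness argument adapted to the non-local setting, following \cite{SVa-nonlocal-ext}. First I would reduce to bounded functions: since $W$ satisfies \eqref{HP_W_3}, there exist $k>1$ and $c>0$ such that $W(z)\geq c$ whenever $\abs{z}\geq k$, so the bound $\frac{\lambda_\pm(\e)}{\e}\int W(u_\e)\,dt\leq S$ forces $\abs{\{\abs{u_\e}\geq k\}}\to 0$ (using that $\frac{\lambda_\pm(\e)}{\e}\to+\infty$, established in Section \ref{sec:scaling}). One then checks that replacing $u_\e$ by its truncation $\mathcal{T}_k(u_\e)$ does not increase the Gagliardo seminorm and changes the $L^1$ norm by an amount that vanishes, so it suffices to prove compactness for the truncated sequence; in particular we may assume $\norma{u_\e}{L^\infty}\leq k$ and hence $(u_\e)_\e$ is bounded in every $L^p$.

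The core of the argument is to show that the sequence does not oscillate too much at small scales. Fix $\eta\in(0,\frac14)$ and partition $(0,1)$ into a family $\mathcal{I}_\delta$ of consecutive intervals of length $\delta$. Call an interval $I\in\mathcal{I}_\delta$ \emph{bad} if both $\abs{I\cap\{u_\e\geq 1-\eta\}}\geq\theta\abs{I}$ and $\abs{I\cap\{u_\e\leq\eta-1\}}\geq\theta\abs{I}$ for a fixed $\theta\in(0,\frac12)$, i.e.\ if $u_\e$ spends a definite fraction of $I$ near each well. On each bad interval, Lemma \ref{KeyLemma} together with Remark \ref{Remark_keylemma} gives $F_\e(u_\e;I)\geq C_0(\theta,\eta)>0$ for $\e$ small; since the localized energies on disjoint intervals sum to at most $F_\e(u_\e)\leq S$, the number of bad intervals is bounded by $S/C_0(\theta,\eta)$, uniformly in $\e$ and $\delta$. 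On every \emph{good} interval $I$, by definition one of the two sets $\{u_\e\geq 1-\eta\}\cap I$ or $\{u_\e\leq \eta-1\}\cap I$ has measure less than $\theta\abs{I}$; combining this with the smallness of $\abs{\{\abs{u_\e}>1-\eta\}\setminus(\{u_\e\geq 1-\eta\}\cup\{u_\e\leq\eta-1\})}$ — which is controlled by $\frac{\e}{\lambda_\pm(\e)}S$ via the potential term, since $W\geq C_\eta>0$ on $\{\abs{z}\le 1-\eta\}$ — one concludes that on a good interval $u_\e$ is, up to a small-measure set, within $\eta$ of a single well $\pm1$.

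With this dichotomy in hand, I would build a comparison function: let $v_\e$ take the value $+1$ on good intervals dominated by the $+1$ well, $-1$ on good intervals dominated by the $-1$ well, and (say) $0$ on bad intervals. Then $v_\e\in BV((0,1))$, its jump set has cardinality controlled by the (uniformly bounded) number of bad intervals plus the number of transitions between good intervals of opposite type; and $\norma{u_\e-v_\e}{L^1}\to 0$ as first $\e\to 0$ and then $\delta,\eta,\theta\to 0$ along a diagonal, using the measure estimates above. A total-variation bound on $v_\e$ that is uniform in $\e$ (for a well-chosen diagonal sequence) then gives, by compactness of $BV$ in $L^1$, a subsequence $v_\e\to u$ in $L^1$, hence in measure, with $u\in BV((0,1),\{-1,1\})$ since the $v_\e$ are near-characteristic; and $u_\e\to u$ in measure follows from $\norma{u_\e-v_\e}{L^1}\to 0$. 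The main obstacle is the bookkeeping in the last step: one must choose $\delta=\delta(\e)$ (and possibly $\eta,\theta$) tending to $0$ slowly enough that the ``good interval'' approximation becomes exact in the limit while the number of bad intervals stays bounded, so that the total variation of $v_\e$ does not blow up; handling the interaction between the diagonal choice of parameters and the $\e$-dependence of the threshold $\overline{\e}$ in Remark \ref{Remark_keylemma} is the delicate point, and is exactly where the non-locality (through Lemma \ref{KeyLemma}) does the work that the gradient term would do in the classical proof.
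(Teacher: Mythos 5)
Your plan follows the same skeleton as the paper's proof: truncation to reduce to bounded functions, a partition of $(0,1)$ into small intervals classified according to whether $u_\e$ spends a definite fraction of the interval near each well, Lemma \ref{KeyLemma} to bound the number of ``mixed'' intervals by $S/C_0(\theta,\eta)$, and a piecewise-constant comparison function $v_\e$ close to $u_\e$ in $L^1$. Where you diverge is the final compactness mechanism. The paper keeps the mesh size $\rho$ (your $\delta$) \emph{fixed} and observes that for fixed $\rho$ the family $\{v_\e^\rho\}_\e$ is finite, so the estimate $\norma{u_\e-v_\e^\rho}{L^1}\le\delta$ is exactly total boundedness of $\{u_\e\}_\e$ in $L^1$; precompactness follows with no diagonal argument and no BV bound on the comparison functions, and finiteness of $\#S(u)$ for the limit is proved in a separate, final step. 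Your route --- BV compactness of the $v_\e$ along a diagonal $\delta=\delta(\e)$ --- can be made to work, but it is more delicate, as you yourself note, and it forces you to prove more at this stage.

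More importantly, one step in your version is asserted but not justified, and it is precisely the step your route cannot do without: the uniform bound on the number of \emph{transitions between good intervals of opposite type}. You list this quantity as a contribution to the jump count of $v_\e$, but nothing in the good/bad dichotomy prevents $u_\e$ from alternating wells on consecutive good intervals, producing order $1/\delta$ sign changes of $v_\e$ and destroying the uniform total-variation bound on which your BV compactness rests. The fix is a further application of Lemma \ref{KeyLemma}: if two adjacent good intervals are dominated by opposite wells, their union is an interval of length $2\delta$ on which both $\{u_\e\ge 1-\eta\}$ and $\{u_\e\le\eta-1\}$ have measure at least roughly $(1-\theta)\delta$, so the union satisfies \eqref{eq:keylemma} with, say, $\theta=\frac14$ and carries energy at least $C_0>0$; splitting these possibly overlapping unions into two disjoint subfamilies (choosing them alternately) bounds the number of transitions by $2S/C_0$. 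This is exactly the argument the paper runs in Step 3 of its proof, on the intervals $\mathcal I_i=(t_i-\rho,t_i+\rho)$ around the jump points of $\psi_\rho$, to show $\#S(u)<+\infty$; in your formulation it must be carried out \emph{before} passing to the limit, since without it the total variation of $v_\e$ is not controlled and the compactness you invoke is unavailable.
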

    
	\begin{proof} 
    \textbf{Step 1:} we start by noting that we may suppose that the functions $u_\e$ are uniformly bounded. 
    
    Indeed, for any $k>1$ let $\mathcal{T}_k(u_\e)$ denote the truncation of $u_\e$ at height $\pm k$. Since the Gagliardo seminorm $\semin{u}{s_\e}$ decreases under truncation (cf \cite[Section 5.6]{Leoni-Fracsob}), we have
\begin{eqnarray*}
\sup_{\e>0}F_\e(\mathcal{T}_k(u_\e))&\leq& S+
\frac{\lambda_{\pm}(\e)}{\e}\int_{\{\abs{u_\e}>k\}} W(\mathcal{T}_k(u_\e))\,dt
\\
&\leq& S+
\frac{\lambda_{\pm}(\e)}{\e}\abs{\{\abs{u_\e}>k\}} (W(k)+W(-k))
\\
&\leq& S+
(W(k)+W(-k))
\frac1{C_k} \frac{\lambda_{\pm}(\e)}{\e}\int_{\{\abs{u_\e}>k\}} W(u_\e)\,dt
\\
&\leq&S'<+\infty.
\end{eqnarray*}
In this formula we have set $C_k=\min_{\abs{z}\geq k} W(z)$, which is strictly positive by the assumptions on the double-well potential.
This shows that the truncated functions satisfy the assumption of the lemma. 

We can then deduce that the claim of the lemma holds also for $u_\e$ once it holds for $\mathcal{T}_k(u_\e)$. Indeed, since we have 
\begin{equation}\label{Stima_SmallErrorTruncation}
        \int_{\{\abs{u_\e}>k\}} W(u_\e)\,dt\leq S\frac{\lambda_{\pm}(\e)}{\e}\to0 \quad\text{as }\e\to0,
    \end{equation}
    we obtain
    \begin{equation}\abs{\{u_\e \neq \mathcal{T}_k(u_\e)\}}=
        \abs{\{\abs{u_\e}>k\}}\leq \frac{1}{C_k}\int_{\{\abs{u_\e}>k\}} W(u_\e)\,dt\to0\quad \text{as }\e\to0;
    \end{equation}
 that is, the difference of $u_\e$ and $\mathcal{T}_k(u_\e)$ tends to $0$ in measure as $\e\to 0$.

    \textbf{Step 2:}
        we claim that the set $\{u_{\e}\}_\e$ is totally bounded in $L^1(0,1) $, that is, for any $\delta>0$ there exists a finite set $\mathcal A \subset L^1$ such that for any small $\e$ there exists $ v_{\e} \in \mathcal A$ with
        $$
        \left\|u_{\e}- v_{\e}\right\|_{L^1} \leq \delta.
        $$
        This implies that $u_\e$ converges, up to subsequences, to a cluster point of the family $\{v_\e\}_\e$.
        
		Let $\eta\in(0,\frac14)$ and $\rho>0$. We begin by partitioning the interval $[0,1]$ in $\frac{1}{\rho}$ (which we assume to be an integer) subintervals $I_i^\rho$ of length $\rho$. We will refer to the set of indexes in  such partition as $\mathcal{J}_\e$ and further classify these intervals as follows:
		
		$$\mathcal{J}_\e^{1} = \left\{i\in\mathcal{J}_\e \quad\colon\, \abs{\{{u_\e<1-\eta}\}\cap I_i^\rho}<\eta\rho \right\}$$

		$$\mathcal{J}_\e^{-1}= \left\{i\in\mathcal{J}_\e \quad\colon\, \abs{\{{u_\e>-1+\eta}\}\cap I_i^\rho}<\eta\rho \right\}$$
		
		$$\mathcal{J}_\e^0 =\mathcal{J}_\e\setminus(\mathcal J ^{1}_\e\cup \mathcal J ^{-1}_\e).$$

        \noindent For simplicity, we also call $$K_\e^1=\bigcup_{\mathcal J_\e^1} I_i^\rho\quad K_\e^{-1}=\bigcup_{\mathcal J_\e^{-1}} I_i^\rho\quad K_\e^{0}=\bigcup_{\mathcal J_\e^0} I_i^\rho.$$
        Then, we define
        \begin{equation}
            v_\e^\rho=\begin{cases}
                +1\quad\text{in } K^1_\e\\
                -1\quad\text{elsewhere.}
            \end{cases}
        \end{equation}
        Note that, for every fixed $\rho$, the family $\{v_\e^\rho\}_\e$ is finite.
        
		We now estimate from below the energy $F_\e(u;I_i^\rho)$ for $i\in\mathcal{J}_\e^0$. By subadditivity of $F_\e(\cdot,I)$ with respect to $I$, we have
        \begin{equation}\label{corr1}            
        S\geq \sum_{\mathcal{J}_\e^0}F_\e(u_\e;I_i^\rho).
        \end{equation}
        Moreover, by definition of $\mathcal{J}_\e^0$,
        \[\abs{\{{u_\e>1-\eta}\}\cap I_i^\rho}\geq\eta\rho\quad\text{and}\quad\abs{\{{u_\e<-1+\eta}\}\cap I_i^\rho}\geq\eta\rho.\]
        This implies, using Lemma \ref{KeyLemma} and \eqref{corr1} (and Remark \ref{Remark_keylemma}) that, for $\e$ small enough
        \begin{equation}
            S\geq \sum_{\mathcal{J}_\e^0}F_\e(u_\e;I_i^\rho)\geq
        \abs{\mathcal{J}_\e^0} C_0(\eta)
        \end{equation}
        for some positive constant $C_0$.
        It follows that, for $\rho$ small enough (depending on $\eta$), 
		\begin{equation}\label{Stima_K0}
			\abs{K_\e^{0}}=\sum_{\mathcal{J}_\e^0}\abs{I_i^\rho}\leq \frac{S}{C_0(\eta)}\rho\leq \frac{\delta}{8}.
		\end{equation}

    From this point onward, by the reasoning performed in Step 1, we may assume that $\abs{u_\e}\leq 1+\eta$ a.e. for every $\e>0$.
    
        We now estimate the $L^1$ distance between $u_\e$ and $v_\e$ outside of $K_\e^{0}$. We will do so by estimating such distance in the subsets of $K_\e^1$ (resp. $K_\e^{-1}$) where $u_\e$ is away from $\pm1$, when it is close to $-1$ and when it is close to $1$.
        First of all, we have
        \[
        \abs{\{\abs{u_\e}\leq 1-\eta \}}
        \leq 
        \frac{1}{C_\eta}\int_0^1 W(u_\e)\,dt\leq 
        \frac{\e}{\lambda_\pm(\e)}\frac1{C_\eta} S.
        \]
        Thus, since $\abs{u_\e-v_\e}\leq3$,
        \begin{equation}\label{stimainmezzo}
            \int_{\{\abs{u_\e}\leq 1-\eta \}}\abs{u_\e-v_\e}dt
            \leq 3\abs{\{\abs{u_\e}\leq 1-\eta \}}\leq \frac{\e}{\lambda_\pm(\e)}\frac{3S}{C_\eta}.
        \end{equation}
        Regarding the set where $u_\e$ is close to $-1$, we have
        \[
        \abs{K_\e^1\cap \{{u_\e}\leq \eta-1 \}}
        =
        \sum_{\mathcal{J}_\e^1}\abs{I^\rho_i\cap \{{u_\e}\leq \eta-1 \}}
        \leq
        \abs{\mathcal{J}_\e^1}\eta\rho=\eta\abs{K_\e^1},
        \]
        which implies, 
        \[\int_{K_\e^1\cap \{{u_\e}\leq \eta-1 \}}\abs{u_\e-v_\e}dt\leq 3\eta\abs{K_\e^1}.\]
        Moreover, we have
        \[
        \int_{K_\e^1\cap \{{u_\e}\geq 1-\eta \}}\abs{u_\e-v_\e}dt=
        \int_{K_\e^1\cap \{{u_\e}\geq 1-\eta \}}\abs{u_\e-1}dt\leq2
        \eta\abs{K_\e^1}.
        \]
        This and \eqref{stimainmezzo} lead to
        \begin{equation}\label{L1dist-vicino1}
                \int_{K_\e^1}\abs{u_\e-v_\e}dt\leq 5\eta+\frac{\e}{\lambda_\pm(\e)}\frac{3S}{C_\eta}.
        \end{equation}
        Performing the same computations for $K_\e^{-1}$ (up to exchanging the roles of the sets where $u_\e$ is close to $1$ and $-1$), we obtain
        \[
        \abs{K_\e^{-1}\cap \{{u_\e}\geq 1-\eta\}}
        \leq\eta\abs{K_\e^{-1}}
        \quad
        \text{and}
        \quad
        \int_{K_\e^{-1}\cap \{{u_\e}\leq \eta-1 \}}\abs{u_\e-v_\e}dt
        \leq
        2\eta\abs{K_\e^1},
        \]
        which, as before, implies
        \begin{equation}\label{L1dist-vicino-1}
                \int_{K_\e^{-1}}\abs{u_\e-v_\e}dt\leq 5\eta+\frac{\e}{\lambda_\pm(\e)}\frac{3S}{C_\eta}.
        \end{equation}
        Putting \eqref{Stima_K0}, \eqref{L1dist-vicino1} and \eqref{L1dist-vicino-1} together we obtain, for $\eta$, $\e$ and $\rho$ small enough,
        \[\begin{split}
            \int_{[0,1]}\abs{u_\e-v_\e}dt&\leq 
            \int_{K_\e^1}\abs{u_\e-v_\e}dt+
            \int_{K_\e^{-1}}\abs{u_\e-v_\e}dt+
            \int_{K_\e^{0}}\abs{u_\e-v_\e}dt\\
            & \leq2\left(5\eta+\frac{\e}{\lambda_\pm(\e)}\frac{3S}{C_\eta}\right) +3 \abs{K_\e^{0}}\leq \delta.
        \end{split}\]
    
	This proves the total boundedness of $\{u_\e\}_\e$. It follows that $u_\e$ converges (up to subsequences) to some function $u\in\elle1(0,1)$ which is a cluster point for the family $\{v_\e^\rho\}_{\e}$. Moreover, since $\abs{v_\e}=1$ for all $\e$, it follows that $u=2\rchi_E-1$ for some set $E\subset (0,1)$. Note that the choice of the limit $u$ is not affected by the height $k>1$ of the truncation we applied at the beginning of the proof.

    \textbf{Step 3:}
    we claim that $\#S(u)$ is finite. To prove it, define the functions
    $\psi_\rho$ to be equal to $1$ in $I_i^\rho$ if $\abs{E\cap I_i^\rho}\geq\frac{\rho}{2}$ and $-1$ otherwise. As $\psi_\rho\to u$ in measure as $\rho\to0^+$, it suffices to show that       
    $\#S(\psi_\rho)$ is equibounded. 
    Let $I_i^\rho$ be an interval such that $\psi_\rho$ has a jump point, denoted by $t_i$, either at the beginning or the end of $I_i^\rho$.  Consider the interval $\mathcal I_i=(t_i-\rho, t_i+\rho)$.
    
    Note that, by convergence in measure, 
    $$\abs{\{u_\e\geq 1-\eta\}\cap \mathcal I_i}=: a_\e \rho \geq \frac\rho2\quad\text{and}\quad \abs{\{u_\e\geq \eta-1\}\cap \mathcal I_i} =: b_\e \rho \geq \frac\rho2$$
    for $\e$ small enough.
    We can thus apply Lemma \ref{KeyLemma} (with $\theta=\frac14$) to each interval $\mathcal I_i$ to obtain
	\begin{equation}\begin{split}
		    F_\e(u_\e;\mathcal I_i)\geq \lambda_+(\e) 8 &(1-\eta)^2\frac{\e^{2s_\e-1}\abs{I}^{2s_\e-1}}{2s_\e}\left(\frac{Z^{1-2s_\e}\e^{1-2s_\e}-1}{2s_\e-1}-6\right),
		\end{split}
	\end{equation}
    with $Z$ given by \eqref{Z-keylemma},
    which tends to $8$ as $\e\to0^+$. In particular, it is definitely greater than a positive constant $C_0>0$ (see also Remark \ref{Remark_keylemma}). Analogously, for $s_\e<\frac12$, we obtain
	\begin{equation}
        \begin{split}
		    F_\e(u_\e;\mathcal I_i)\geq \lambda_-(\e) 8 &(1-\eta)^2\frac{\abs{I}^{2s_\e-1}}{2s_\e}\left(\frac{Z^{1-2s_\e}\e^\frac{{1-2s_\e}}{2s_\e}-1}{2s_\e-1}-6\right)\geq C_0>0.
        \end{split}
	\end{equation}

    Note that the intervals $\mathcal I_i$ may not be disjoint. However, by choosing them alternately, we can divide them in two family of disjoint intervals (indexed, say, by $J_1$ and $J_2$).
    In particular, we have that
    \[\begin{split}
        &S\geq F_\e(u_\e)\geq\sum_{i\in J_1}F_\e(u_\e,\mathcal I_i),\\
        &S\geq F_\e(u_\e)\geq\sum_{i\in J_2}F_\e(u_\e,\mathcal I_i).
    \end{split}
    \]
    Thus
    $$2S\geq 2F_\e(u_\e)\geq \sum_{t_i\in S(\psi_\rho)}F(u_\e,\mathcal I_i)\geq C_0 \#S(\psi_\rho),$$
    which concludes the proof.
	\end{proof}
	The next result deals with the lower bound.
	\begin{lemma}\label{liminf1}
		Let $u_\e\to u$ in measure. Then $u\in\BV$ and
		\begin{equation}
			\liminf_{\e\to0} F_\e(u_\e)\geq m_\frac12\#S(u).
		\end{equation}
	\end{lemma}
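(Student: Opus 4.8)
The plan is to reduce the lower bound to a sum over the jump points of the limit $u$, and on a neighbourhood of each jump point recover the sharp constant $m_{\frac12}=8$ by comparison with the known critical case $s=\frac12$ treated in \cite{ABS}. First I would dispose of trivial cases: if $\liminf_\e F_\e(u_\e)=+\infty$ there is nothing to prove, so I assume along a subsequence (not relabelled) that $F_\e(u_\e)\le S<+\infty$; by Lemma \ref{compactness1} the limit $u$ then belongs to $\BV$, so $u=2\rchi_E-1$ and $\#S(u)=:N$ is finite. Fix $\eta\in(0,\frac14)$; as in Step 1 of Lemma \ref{compactness1} I may truncate and assume $\abs{u_\e}\le 1+\eta$ a.e., since truncation only decreases the Gagliardo seminorm and changes $\int W(u_\e)$ by an infinitesimal amount. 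Around each jump point $t_j$, $j=1,\dots,N$, pick disjoint intervals $I_j=(t_j-r,t_j+r)$ with $r$ small enough that each $I_j$ contains no other jump point and the $I_j$ are pairwise disjoint inside $(0,1)$; by subadditivity of $F_\e(\cdot\,;\cdot)$ with respect to the domain (discarding the interactions between the $I_j$'s and the rest, which is legitimate since the double integral in $\semin{\cdot}{s_\e}^2$ is a sum of nonnegative contributions), one has $F_\e(u_\e)\ge\sum_{j=1}^N F_\e(u_\e;I_j)$. It then suffices to prove, for each fixed $j$,
\[
\liminf_{\e\to0}F_\e(u_\e;I_j)\ge m_{\frac12}-\omega(\eta,r),
\]
with $\omega(\eta,r)\to0$ as $\eta,r\to0$, and then let $\eta,r\to0$.

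For the local estimate I would distinguish the three regimes of $s_\e$. In the critical regime $s_\e=\frac12$ this is precisely the one-dimensional lower bound of \cite{ABS} localized on $I_j$, giving exactly $8$. In the regime $s_\e>\frac12$, by the choice of scaling $\lambda_+(\e)\e^{2s_\e-1}=\dfrac{1-2s_\e}{\e^{2s_\e-1}-1}\,\e^{2s_\e-1}=\dfrac{2s_\e-1}{\e^{1-2s_\e}-1}=\dfrac{(2s_\e-1)\abs{\log\e}}{e^{(2s_\e-1)\abs{\log\e}}-1}\cdot\dfrac1{\abs{\log\e}}$, which behaves like $c_\e/\abs{\log\e}$ with $c_\e$ bounded between two positive constants; moreover the kernel $\abs{x-y}^{-1-2s_\e}$ dominates $\abs{x-y}^{-2}$ on $I_j\times I_j$ when $\abs{x-y}\le 1$, so $\semin{u_\e}{s_\e}(I_j)^2\ge \semin{u_\e}{1/2}(I_j)^2 - \text{(boundary error)}$. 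Combining these, $F_\e(u_\e;I_j)$ is bounded below by $\dfrac{c}{\abs{\log\e}}\Bigl(\dfrac1\e\int_{I_j}W(u_\e)\,dt+\semin{u_\e}{1/2}(I_j)^2\Bigr)$ up to lower-order terms, and I would want to invoke (a localized version of) the ABS lower bound together with the sharp asymptotics $\lim_{s\to\frac12^+}(2s-1)m_s=m_{\frac12}$ — but since comparing the kernels loses the precise constant, the cleaner route is to run the ABS-type argument directly for the $s_\e$-kernel: optimize the "wall energy'' versus the "non-local interaction'' exactly as in Section \ref{sec:scaling} (this is the content of Lemma \ref{KeyLemma} applied with $\theta$ close to $\frac12$), and track that the leading constant produced is $8(1-\eta)^2(1+o(1))$ as $\e\to0$. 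The regime $s_\e<\frac12$ is symmetric, using $\lambda_-(\e)=\dfrac{2s_\e-1}{\e^{(1-2s_\e)/(2s_\e)}-1}$ and the same optimization with the rescaled exponent $\frac{1}{2s_\e}$; the remark after Lemma \ref{KeyLemma} gives that the bracket stays bounded below by a positive constant, and a finer bookkeeping of the constants yields the sharp $8$ in the limit.

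The main obstacle is the \emph{sharpness} of the local constant: getting a positive lower bound is routine via Lemma \ref{KeyLemma}, but extracting exactly $m_{\frac12}=8$ in the limit requires that the optimization in Section \ref{sec:scaling} be done with the true $s_\e$-kernel (not the comparison kernel $\abs{x-y}^{-2}$), controlling the boundary/tail corrections on $I_j$ uniformly and letting $\theta\to\frac12$, $\eta\to0$, $r\to0$ in the right order so that all error terms vanish; in particular one must check that the error $O(\e^{2s_\e-1})$ in \eqref{CalcoloScaling+} and the analogous remainder in the $s_\e<\frac12$ case, once multiplied by $\lambda_\pm(\e)$, tend to $0$, and that the "$-2\frac{1-\theta}{\theta}$'' penalty in Lemma \ref{KeyLemma} can be made negligible by choosing $\theta$ close to $\frac12$ (which is possible precisely because $u_\e\to u$ in measure forces, on a shrinking neighbourhood of $t_j$ away from the other jumps, both $\abs{\{u_\e\ge1-\eta\}\cap I_j}$ and $\abs{\{u_\e\le\eta-1\}\cap I_j}$ to be asymptotically $\ge(\frac12-o(1))\abs{I_j}$). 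Once the sharp local bound is in hand, summing over $j$ and sending the auxiliary parameters to their limits finishes the proof, and in particular re-confirms $u\in\BV$ with the stated energy bound.
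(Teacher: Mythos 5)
Your plan follows essentially the same route as the paper's proof: pass to a subsequence with bounded energy, invoke Lemma \ref{compactness1} to get $u\in\BV$, localize on disjoint intervals around the jump points, apply Lemma \ref{KeyLemma} on each (its hypotheses being guaranteed by convergence in measure), and pass to the limit first in $\e$ and then in $\eta$. One correction: the penalty $-2\frac{1-\theta}{\theta}$ in Lemma \ref{KeyLemma} does \emph{not} become negligible by taking $\theta$ close to $\frac12$ (it tends to $-2$, not to $0$); it disappears in the limit only because it is multiplied by the prefactor $\lambda_{\pm}(\e)\,\e^{(2s_\e-1)^+}\abs{I}^{2s_\e-1}/(2s_\e)$, which vanishes as $\e\to0$, and this is precisely why the paper can simply fix $\theta=\frac14$. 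Since you correctly identify this vanishing-prefactor mechanism for the other remainder terms, the argument goes through once that point is amended.
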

	
	\begin{proof}
		  Up to extracting a subsequence (not relabeled), we
          can assume that 
          \[\sup_{\e} F_\e(u_\e)<+\infty.\]
          This, by Lemma \ref{compactness1}, implies that $u\in\BV$.
		
		Note that for every $t_i\in S(u)$ there exists an interval $I_i=(t_i-\delta,t_i+\delta)$ such that $I_i\cap I_j=\emptyset$ if $i\neq j$. 
        Note that, by convergence in measure, the assumptions of Lemma \ref{KeyLemma} are satisfied with $\theta=\frac14$ for every $I_i$, provided that $\e$ is small enough.
        It follows that, for $s_\e>\frac12$,
		\begin{equation}\begin{split}
		    F_\e(u_\e)\geq&\sum_{i=1}^{\#S(u)} F_\e(u_\e;I_i)\\
            \geq & \sum_{i=1}^{\#S(u)}\lambda_+(\e)
		8 (1-\eta)^2\frac{\e^{2s_\e-1}\abs{I}^{2s_\e-1}}{2s_\e}\left(\frac{Z^{1-2s_\e}\e^{1-2s_\e}-1}{2s_\e-1}-6\right),
		\end{split}
		\end{equation}
        and, for $s_\e<\frac{1}{2}$,
		\begin{equation}
        \begin{split}
		    F_\e(u_\e)\geq&\sum_{i=1}^{\#S(u)} F_\e(u_\e;I_i)\\
            \geq & \sum_{i=1}^{\#S(u)}\lambda_-(\e)8 (1-\eta)^2\frac{\abs{I}^{2s_\e-1}}{2s_\e} \left(\frac{Z^{1-2s_\e}\e^\frac{{1-2s_\e}}{2s_\e}-1}{2s_\e-1}-6\right).
        \end{split}
		\end{equation}
        In both cases, taking the lower limit as $\e\to0^+$ we get
        \begin{equation}
            \liminf_{\e\to0^+} F_\e(u_\e)\geq 8 (1-\eta)^2\#S(u).
        \end{equation}
        This, letting $\eta\to0^+$, concludes the proof.
	\end{proof}
    The next lemma provides the upper bound inequality, which concludes the proof of the $\Gamma$-convergence in Theorem \ref{GammaLimite}.
    \begin{lemma}\label{limsup1}
		Let $u\in\BV$, then there exists a sequence $(u_\e)_\e$ such that $u_\e\in H^{s_\e}(0,1)$ such that $u_\e\to u$ in $\elle2(0,1)$ and 
		\begin{equation}
			\lim_{\e\to0} F_\e(u_\e)= m_\frac12\#S(u).
		\end{equation}
	\end{lemma}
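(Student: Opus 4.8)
The plan is to reduce, by locality and a standard argument, to the case of a single jump, and then to construct the recovery sequence by gluing together suitably rescaled optimal profiles for the critical problem $s=\frac12$. First I would observe that, since $u\in\BV$ is a step function with finitely many jumps $S(u)=\{t_1,\dots,t_N\}$, it suffices to treat one jump at a time: fix $\delta>0$ so small that the intervals $(t_i-\delta,t_i+\delta)$ are pairwise disjoint and contained in $(0,1)$, build a transition in each of them, and set $u_\e=u$ (i.e. $\pm1$) outside. Because $W(\pm1)=0$ there is no bulk cost away from the transitions, and the nonlocal seminorm is subadditive with respect to the domain up to the long-range interaction terms; the interactions between distinct transition regions, and between a transition region and the far-away constant parts, decay and can be controlled because $u$ is bounded and the kernel $|x-y|^{-1-2s_\e}$ is integrable at infinity for $s_\e$ bounded away from $0$ — this gives a contribution that, after multiplication by $\lambda_\pm(\e)\e^{(2s_\e-1)^+}\to 0$, is negligible. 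Hence the model problem is a single transition on $(-\delta,\delta)$ from $-1$ to $+1$.

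For the single transition the natural idea is to exploit the known result of Alberti–Bouchitté–Seppecher at $s=\frac12$ together with the fact, recorded in the excerpt, that the scalings $\lambda(\e,s_\e)$ and (for $s_\e>\frac12$) the extra factor $\e^{2s_\e-1}$ all converge, as $\e\to0$ with $s_\e\to\frac12$, to the critical scaling $\frac1{|\log\e|}$ and to $1$ respectively. Concretely I would take the recovery sequence $w_\e$ for the $H^{1/2}$-functional $\frac1{|\log\e|}F_\e^{1/2}$ provided by \cite{ABS} (an explicit near-optimal profile, essentially a mollified/rescaled step achieving the value $m_{1/2}=8$ in the limit), and use $u_\e:=\mathcal T_1(w_\e)$ or a direct truncated/cut-off version of it supported in $(t_i-\delta,t_i+\delta)$, patched to be identically $\pm1$ outside. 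Then I would estimate $F_\e(u_\e;(t_i-\delta,t_i+\delta))$ directly: the $W$-term is $\frac{\lambda_\pm(\e)}{\e}\int W(u_\e)$, which differs from $\frac1{|\log\e|\,\e}\int W(w_\e)$ only by the factor $\lambda_\pm(\e)|\log\e|\to 1$ plus a cut-off error; the Gagliardo term with exponent $s_\e$ has to be compared to the one with exponent $\frac12$, and here I would use that, on a fixed bounded domain and for a function with values in $[-1,1]$, $\semin{u_\e}{s_\e}^2$ and $\semin{u_\e}{1/2}^2$ differ by a factor controlled by $|x-y|^{1-2s_\e}=1+O((2s_\e-1)\log|x-y|)$, so that $\e^{(2s_\e-1)^+}\semin{u_\e}{s_\e}^2 = (1+o(1))\semin{u_\e}{1/2}^2$ up to errors that vanish after multiplication by $\lambda_\pm(\e)\sim\frac1{|\log\e|}$; one must be a little careful near the diagonal, but the diagonal singularity is the same order $1+2s_\e$ and the extra weight $|x-y|^{1-2s_\e}\to1$ uniformly on compact neighbourhoods of the diagonal, so the comparison is safe. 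Collecting these one obtains $\limsup_\e F_\e(u_\e;(t_i-\delta,t_i+\delta))\le m_{1/2}+o_\delta(1)$ for each $i$, hence $\limsup_\e F_\e(u_\e)\le m_{1/2}\#S(u)$ after summing and, if needed, a diagonal argument letting $\delta\to0$; the matching lower bound is Lemma \ref{liminf1}, so equality follows and $u_\e\to u$ in $L^2$ by construction since $u_\e=u$ outside a set of measure $2N\delta$ and $|u_\e|\le1$.

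The main obstacle I expect is the rigorous comparison between the $s_\e$-Gagliardo energy and the $\frac12$-Gagliardo energy, uniformly in the joint limit $\e\to0$, $s_\e\to\frac12$, when $s_\e$ may approach $\frac12$ arbitrarily slowly or fast relative to $|\log\e|$. In the regime $|2s_\e-1|\,|\log\e|\to\infty$ the naive bound $\e^{2s_\e-1}|x-y|^{-1-2s_\e}\le |x-y|^{-1-2s_\e}$ is too lossy near the diagonal and one genuinely needs the minimization/truncation estimate of Lemma \ref{KeyLemma} (or a direct computation on the explicit profile) to see that the ABS profile, rescaled at the correct $\e$-dependent rate, still produces energy $\to 8$; this is exactly where the separation-of-scales heuristic of the introduction must be made quantitative. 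I would handle it by splitting the double integral into the near-diagonal part $|x-y|<r_\e$ and the far part $|x-y|\ge r_\e$ for a well-chosen $r_\e\to0$, using the scaling of the chosen profile (whose transition layer has an $\e$-dependent width) so that on the near-diagonal part the weight $|x-y|^{1-2s_\e}$ is close to $1$, and on the far part the contribution is $O(\lambda_\pm(\e))=o(1)$; the bookkeeping is routine once $r_\e$ is fixed, and the explicit nature of the ABS optimal profile makes the integrals computable.
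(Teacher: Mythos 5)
Your final construction is, in substance, the paper's: an explicit piecewise-affine ramp of width of order $\e$ at each jump point, extended by $\pm1$ outside, with the interactions between far-apart points bounded by a constant that is then killed by the vanishing prefactor $\lambda_\pm(\e)\e^{(2s_\e-1)^+}$, and the near-field part computed directly. The route you put forward first, however --- treating the problem as a perturbation of the Alberti--Bouchitt\'e--Seppecher upper bound at $s=\frac12$ via the weight $\abs{x-y}^{1-2s_\e}$ --- is not merely ``delicate near the diagonal'': it is quantitatively false outside the sub-logarithmic regime, and you are right to distrust it. Indeed, if $(2s_\e-1)\abs{\log\e}\to+\infty$, then for the ramp $u_\e$ one has
\[
\e^{2s_\e-1}\semin{u_\e}{s_\e}^2\sim\frac{8}{2s_\e-1}\qquad\text{while}\qquad \semin{u_\e}{\frac12}^2\sim 8\abs{\log\e},
\]
so the ratio of the two quantities tends to $0$, and likewise $\lambda_\pm(\e)\abs{\log\e}\to+\infty$ rather than $1$; both intermediate claims in your estimate (that the bulk terms differ by a factor $\lambda_\pm(\e)\abs{\log\e}\to1$, and that $\e^{(2s_\e-1)^+}\semin{u_\e}{s_\e}^2=(1+o(1))\semin{u_\e}{1/2}^2$) hold only when $(2s_\e-1)\abs{\log\e}\to0$. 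The correct mechanism is not that the $s_\e$-energy of the profile approximates its $\frac12$-energy, but that $\lambda_\pm(\e)$ is tuned exactly to the $s_\e$-energy of the ramp: the paper evaluates the seminorm on each square around a jump in closed form as $\frac{8}{2s_\e}\cdot\frac{(2\e)^{1-2s_\e}-1}{2s_\e-1}+O(1)$ and then checks that $\lambda_\pm(\e)\e^{(2s_\e-1)^+}\cdot\frac{(2\e)^{1-2s_\e}-1}{2s_\e-1}\to1$ in \emph{every} regime of $s_\e\to\frac12$. Your own fallback (``a direct computation on the explicit profile'') is exactly this, so the proposal succeeds provided the comparison step is abandoned entirely rather than patched by a near-diagonal/far-field splitting; note also that Lemma \ref{KeyLemma}, which you mention as an alternative, is a lower bound and contributes nothing to the $\limsup$ inequality.
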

	
	\begin{proof}
		For any jump point $t_i\in S(u)$, we consider the piecewise affine function defined as follows:
		$$u_\e(x)=\begin{cases}
			\mathcal{T}_1(\frac{x-t_i}{\e})u(t_i^+)\quad\text{if } \abs{x-t_i}<\e\\
			u(x)\quad\text{otherwise.}
		\end{cases}$$
        Here $\mathcal{T}_1$ is the truncation function at height $\pm1$ defined in \eqref{DefTroncate}.
        We also define the points $x_0=0$, $x_i=\frac{t_i+t_{i+1}}{2}$ for $1\leq i\leq\#S(u)-1$ and $x_{\#S(u)}=1$.
		
		First, note that $u_\e\to u$ in measure as $\e\to0$. This implies that
		$$\int_0^1 W(u_\e)\to \int_0^1 W(u)=0.$$
		
		To compute the Gagliardo seminorm of $u_\e$, we proceed as follows: 
		let us divide the square $(0,1)\times (0,1)$ in many rectangles, as in Figure \ref{Refpicture} (for reference, consider the solid line to be at the middle point between $t_1$ and $t_2$).
		\begin{figure}[htbp]
		\begin{center}
			\begin{tikzpicture}[scale=7] 
				\def\tone{0.3}   
				\def\ttwo{0.7}   
				\def\eps{0.05}   
				
				\draw[thick] (0,0) rectangle (1,1);
				
				\foreach \t in {\tone, \ttwo} {
					\draw[dashed, color=blue] (\t,0) -- (\t,1);  
					\draw[dashed, color=blue] (0,\t) -- (1,\t);  
					
					\draw[dotted, color=red] (\t+\eps,0) -- (\t+\eps,1); 
					\draw[dotted, color=red] (\t-\eps,0) -- (\t-\eps,1); 
					\draw[dotted, color=red] (0,\t+\eps) -- (1,\t+\eps); 
					\draw[dotted, color=red] (0,\t-\eps) -- (1,\t-\eps); 
				}
				
				\node at (\tone, -0.05) {$t_1$};
				\node[below right] at (\tone+\eps, -0.05) {$t_1 + \e$}; 
				\node[below left] at (\tone-\eps, -0.05) {$t_1 - \e$}; 
				\node[below] at (\ttwo, -0.05) {$t_2$}; 
				\node[right] at (\ttwo+\eps, -0.05) {$t_2 + \e$}; 
				\node at (\ttwo-\eps, -0.05) {$t_2 - \e$}; 
				
				\node[left] at (0, \tone) {$t_1$};
				\node[left] at (0, \tone+\eps) {$t_1 + \e$};
				\node[left] at (0, \tone-\eps) {$t_1 - \e$};
				\node[left] at (0, \ttwo) {$t_2$};
				\node[left] at (0, \ttwo+\eps) {$t_2 + \e$};
				\node[left] at (0, \ttwo-\eps) {$t_2 - \e$};
				
				\draw[color=purple] (0.5,0) -- (0.5,1); 
				\draw[purple] (0,0.5) -- (1,0.5); 
				
				\fill[violet!50] (0.5-0.1,0.5) rectangle (0.5,0.5+0.1); 
				\fill[violet!50] (0.5,0.5-0.1) rectangle (0.5+0.1,0.5); 

			\end{tikzpicture}
		\end{center}
    \caption{Subdivision of the domain of integration}
    \label{Refpicture}
\end{figure}
		
		Our aim is to show that the contribution to Gagliardo seminorm $\semin{u_\e}{{s_\e}}$ given by the interaction of points at a distance greater than a given threshold can be bounded from above by a constant, so that the scaling factor (either $\lambda_-(\e)$ or $\lambda_+(\e)\e^{2s_\e-1}$) would make their contribution to $F_\e(u_\e)$ infinitesimal.
		
		First, note that on the purple squares (which can be taken with sides smaller than $\frac14\inf_{i}\abs{t_i-t_{i+1}}$) we have $u(x)-u(y)=0$. 
		Moreover, calling 
        \[Q_i=[x_i,x_{i+1}]\times [x_i,x_{i+1}],\] 
        we have
		\[\begin{split}
		    &\iint_{[0,1]\times [0,1]}\frac{\abs{u_\e(x)-u_\e(y)}^2}{\abs{x-y}^{1+2s_\e}}\,dx\,dy
		\\
        &=\sum_{i=0}^{\#S(u)-1}\iint_{Q_i}\frac{\abs{u_\e(x)-u_\e(y)}^2}{\abs{x-y}^{1+2s_\e}}\,dx\,dy+
		\iint_{[0,1]\times [0,1]\setminus \bigcup Q_i}\frac{\abs{u_\e(x)-u_\e(y)}^2}{\abs{x-y}^{1+2s_\e}}\,dx\,dy.
		\end{split}\]
		
		\noindent Working term by term, we have
		\[
        \iint_{Q_i}\frac{\abs{u_\e(x)-u_\e(y)}^2}{\abs{x-y}^{1+2s_\e}}\,dx\,dy= 2\int_{x_i}^{t_i-\e}\int_{t_i+\e}^{x_{i+1}}\frac{4}{\abs{x-y}^{1+2s_\e}}\,dx\,dy\,
        \]
		\[+\int_{t_i-\e}^{t_i+\e}\int_{t_i-\e}^{t_i+\e} \frac{\abs{u_\e(x)-u_\e(y)}^2}{\abs{x-y}^{1+2s_\e}}\,dx\,dy +
		4\int_{t_i+\e}^{x_{i+1}}\int_{t_i-\e}^{t_i+\e}\frac{\abs{u_\e(x)-1}^2}{\abs{x-y}^{1+2s_\e}}\,dx\,dy,\]
		which, after computations, leads to
		$$\iint_{Q_i}\frac{\abs{u_\e(x)-u_\e(y)}^2}{\abs{x-y}^{1+2s_\e}}\,dx\,dy=8 \frac{1}{2s}\frac{(2\e)^{1-2s_\e}-1}{2s_\e-1}+O(1).$$
		On the other hand,
		\[\iint_{[0,1]\times [0,1]\setminus \bigcup Q_i}\frac{\abs{u_\e(x)-u_\e(y)}^2}{\abs{x-y}^{1+2s_\e}}\,dx\,dy
        \leq \frac{C}{\inf_{i}\abs{t_i-t_{i+1}}^{1+2s_\e}}
        <+\infty.\]
        We now estimate the energy contribution from the double-well potential. In particular, we have:
        \[\int_0^1 W(u)\,dt=\sum_{i=1}^{\#S(u)}\int_{t_i-\e}^{t_i+\e} W\left(\frac {t-t_i}\e\right)\,dt\leq \#S(u) 2\e \max_{[-1,1]}W.\]
        
		\noindent Putting everything together we have, for $s_\varepsilon>\frac{1}{2}$,
        \[F_\varepsilon(u_\varepsilon)\leq C_1\lambda_+(\varepsilon) + 8\#S(u) \lambda_+(\varepsilon)\varepsilon^{2s_\varepsilon-1} \frac{1}{2s}\left(\frac{(2\varepsilon)^{1-2s_\varepsilon}-1}{2s_\varepsilon-1}+C_2\right),\]
        and, for $s_\varepsilon<\frac12$,
		$$F_\varepsilon(u_\varepsilon)=F_\varepsilon(u_\varepsilon)\leq C_1\lambda_-(\varepsilon) + 8\#S(u) \lambda_-(\varepsilon) \frac{1}{2s}\left(\frac{(2\varepsilon)^{1-2s_\varepsilon}-1}{2s_\varepsilon-1}+C_2\right).$$
        Either way, we proved that
        $$\limsup_{\varepsilon\to0} F_\varepsilon(u_\varepsilon) \leq m_\frac12\#S(u) $$
		which concludes the proof.
	\end{proof}
    
    \begin{remark}(Separation of scales)
        Assume that $$(2s_\e-1)\abs{\log{\e}}\to C\in\R.$$ Then
        \[
        \frac{1-2s_\e}{\varepsilon^{2s_\e-1}-1}\varepsilon^{2s_\e-1}
        \approx\frac{C }{e^{C}-1}\frac{1}{\abs{\log\e}}
        \]
        and
        \[
        \frac{2s_\e-1}{{\varepsilon}^{\frac{1-2s_\e}{2s_\e}}-1}
        \approx\frac{C }{e^{C}-1}\frac{1}{\abs{\log\e}}.
        \]
        In the limit case $C=0$, this computation highlights that in the regime 
        $$
        \abs{2s_\e-1}<<\frac1{|\log\e|},
        $$
        we have the {\em separation of scales} effect discussed in the introduction. Namely, the $\Gamma$-limit of $\lambda(\varepsilon,s) F_\varepsilon^{s}$ is the same as the one obtained first letting $s\to \frac12$ with $\e>0$ fixed, which gives  $\frac1{|\log\e|} F_\varepsilon^{1/2}$, and then letting $\e\to 0$. 
    \end{remark}
    
	\section{On the behaviour of optimal profiles}\label{sec:ms}
		In this section we briefly discuss the properties of the optimal values
		\begin{equation}\label{ms:def}
		    m_s=\inf \left\{\int_{\R} W(u) dt + \iint_{\R\times\R}\frac{\abs{u(x)-u(y)}^2}{\abs{x-y}^{1+2s}}\,dx\,dy\mid u\in H^s(\R),u(\pm\infty)=\pm1\right\}
		\end{equation}
		as $s\to{\frac{1}{2}}^+$.
        In particular, we prove that the value $m_\frac12$ is the solution of a (suitably rescaled) optimal profile problem and that
        \[
        \lim_{s\to\frac12^+}(2s-1)m_s=m_\frac12\,.
        \]
        Both properties are strongly related to the fact that, while the $\Gamma$-limit of $F^s_\varepsilon$ with $s>\frac{1}{2}$ involves the optimal constant $m_s$, the case $s=\frac12$ requires an additional scaling argument.

        \noindent For any $T\geq0$ and $s\geq\frac12$ consider the family $\mathcal H_T^s$ of real valued functions $v$ defined on $\R$ such that $v\in H^{s}(-T,T)$, $v(\pm T)=\pm1$, $v(t)=1$ if $t>T$ and $v(t)=-1$ if $t<-T$. 
        It was shown in \cite{PVi-GammaLim-s01} that, for any $s\in\left(\frac12,1\right)$, the optimal values $m_s$ satisfy
        \begin{equation}\label{eq:opt_s}
            m_{s}=\lim_{T\to+\infty}\inf \left\{\int_{-T}^T W(u) dt + \int_{-T}^T\int_{-T}^T\frac{\abs{u(x)-u(y)}^2}{\abs{x-y}^{{1+2s}}}\,dx\,dy\mid v\in\mathcal H_T^s\right\}.
        \end{equation}
        The following lemma gives a similar result for $s=\frac12$.
        \begin{lemma}
        The quantity \( m_{\frac{1}{2}} \) satisfies the following property:
            \begin{equation}\label{eq:opt12}
                m_{\frac12}=\lim_{T\to+\infty}\frac{1}{\log(2T)}\inf \left\{\int_{-T}^T W(u) dt + \int_{-T}^T\int_{-T}^T\frac{\abs{u(x)-u(y)}^2}{\abs{x-y}^{2}}\,dx\,dy\mid v\in\mathcal H_T^\frac12\right\}.
            \end{equation}
        \end{lemma}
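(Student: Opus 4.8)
The plan is to read \eqref{eq:opt12} as the $s=\frac12$ $\Gamma$-convergence result of Theorem \ref{GammaLimite} (that is, of \cite{ABS}) in rescaled form, and then to pass from $\Gamma$-convergence to convergence of infima by means of the compactness already established. Given $v\in\mathcal H_T^{\frac12}$, set $\e=\frac1T$ and $u(\xi)=v(\xi/\e)$ for $\xi\in(-1,1)$: a change of variables gives $\int_{-T}^T W(v)\,dt=\frac1\e\int_{-1}^1 W(u)\,d\xi$, while the scale invariance of the kernel $|x-y|^{-2}$ gives $\int_{-T}^T\!\int_{-T}^T\frac{|v(x)-v(y)|^2}{|x-y|^2}\,dx\,dy=\semin{u}{\frac12}(-1,1)^2$. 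The correspondence $v\leftrightarrow u$ is a bijection between $\mathcal H_T^{\frac12}$ and the class of functions on $(-1,1)$ equal to $\pm1$ outside $(-1,1)$ and matching these values at $\pm1$, so the infimum in \eqref{eq:opt12} equals $\inf\bigl\{\frac1\e\int_{-1}^1 W(u)+\semin{u}{\frac12}(-1,1)^2\bigr\}=\abs{\log\e}\inf F_\e$, where $F_\e$ is the critical functional of \eqref{Functionals-Fe} (with $s_\e\equiv\frac12$) read on $(-1,1)$. Since $\frac{\abs{\log\e}}{\log(2/\e)}\to1$, it suffices to prove $\lim_{\e\to0^+}\inf F_\e=m_{\frac12}$.

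For the upper bound I feed the construction of Lemma \ref{limsup1} the single-jump function $u_0$ on $(-1,1)$ equal to $-1$ on $(-1,0)$ and to $1$ on $(0,1)$; the resulting recovery sequence $u_\e$ coincides with $\pm1$ near $\pm1$, hence is admissible after extension, and $F_\e(u_\e)\to m_{\frac12}\#S(u_0)=m_{\frac12}$, so $\limsup_{\e\to0^+}\inf F_\e\le m_{\frac12}$. For the lower bound I argue by contradiction: if $\inf F_\e\to\ell<m_{\frac12}$ along some $\e\to0$, I pick almost-minimizers $u_\e$, which have equibounded energy by the previous step, apply the compactness Lemma \ref{compactness1} to get $u_\e\to u$ in measure with $u$ a $\{-1,1\}$-valued function of bounded variation on $(-1,1)$, and then apply the lower-bound estimate of Lemma \ref{liminf1} (equivalently Lemma \ref{KeyLemma} with $\theta=\frac14$) on an interval around a jump point of $u$ to conclude $\liminf_\e F_\e(u_\e)\ge m_{\frac12}>\ell$, a contradiction. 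Combining the two inequalities yields \eqref{eq:opt12}.

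The crux — and the reason why the $\log(2T)$ normalization is needed at $s=\frac12$ but not at $s>\frac12$ — is the lower bound, precisely the step in which one must rule out that an admissible function realize the transition from $-1$ to $1$ cheaply, for instance by concentrating it near an endpoint of $(-1,1)$ (equivalently of $(-T,T)$): such a $u_\e$ would converge in measure to a constant, so one must use the boundary condition defining $\mathcal H_T^{\frac12}$ — read as a genuine $H^{\frac12}$ matching condition across $\pm T$ — to ensure that the jump is always registered by the long-range part of the seminorm, hence that any limit $u$ is non-constant and $\#S(u)\ge1$; once this is in place Lemma \ref{KeyLemma} closes the estimate. The existence of the limit in \eqref{eq:opt12} could alternatively be obtained from a Fekete-type superadditivity argument for $T\mapsto\inf_{\mathcal H_T^{\frac12}}(\cdots)$, in the spirit of the proof of \eqref{eq:opt_s} in \cite{PVi-GammaLim-s01}; its identification with $m_{\frac12}=8$ would still go through the rescaling above. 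I expect this identification — i.e.\ controlling the boundary behaviour of near-optimal profiles so that neither endpoint effects nor the shape of the optimal profile produce a constant different from $8$ — to be the only step requiring genuine care, the remaining computations being rescalings of those already carried out in Section \ref{sec:gammalim}.
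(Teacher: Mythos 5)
Your reduction by rescaling is correct and is genuinely different from the paper's argument: setting $\e=1/T$ and using the scale invariance of the $H^{1/2}$ kernel, you turn \eqref{eq:opt12} into $\lim_{\e\to0}\inf F_\e=m_{1/2}$ over functions on $(-1,1)$ with the boundary values $\pm1$, and then run the $\Gamma$-convergence machinery. The paper instead proves both bounds directly: the lower bound by the rearrangement and the optimization in $\sigma=1-a-b$ of Section \ref{sec:scaling} combined with the inequality $-\log x+Mx\geq\log M$, and the upper bound with the explicit competitor $\mathcal{T}_1$ --- which is exactly what your recovery sequence becomes after undoing the rescaling, so your upper bound is fine.

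The gap is in the lower bound, precisely at the point you yourself flag as the crux. The contradiction argument via Lemma \ref{compactness1} and Lemma \ref{liminf1} only yields $\liminf_\e F_\e(u_\e)\geq m_{1/2}\#S(u)$, which is vacuous if $\#S(u)=0$, and the mechanism you invoke to exclude this --- the boundary condition of $\mathcal H_T^{1/2}$ being ``registered by the long-range part of the seminorm'' --- is not available: the energy in \eqref{eq:opt12} integrates only over $(-T,T)\times(-T,T)$, so no cross term with the exterior appears, and the condition $v(\pm T)=\pm1$ is purely pointwise and essentially free. Concretely, $v(t)=\mathcal{T}_1(t+T-2)$ is admissible, places the entire transition at distance $O(1)$ from the left endpoint, and has localized energy bounded uniformly in $T$ (the phase--phase interaction is of order $\log\abs{A}+\log\abs{B}-\log(\abs{A}+\abs{B})=O(1)$ when one phase has measure $O(1)$); after rescaling it converges in measure to the constant $1$, so $\#S(u)=0$ and your argument does not close. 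To repair it you need an input pinning the transition away from the endpoints (a centering normalization, or the two-sided mass condition \eqref{eq:keylemma} with $\theta$ bounded away from $0$, which is what is genuinely available in the $\Gamma$-liminf proof because there the interval is chosen around an \emph{interior} jump of the limit). This same non-concentration information is what keeps the terms $\log(1-a)+\log(1-b)$ bounded in the paper's own estimate, so your diagnosis of where the difficulty lies is accurate, but the proposal does not supply the missing ingredient. A minor additional point: Lemma \ref{KeyLemma} and the lemmas of Section \ref{sec:gammalim} are stated only for $s_\e\neq\frac12$, so at $s\equiv\frac12$ you would have to appeal to the corresponding estimates of \cite{ABS} rather than to them.
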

        \begin{proof}Let 
            \[\tilde m_\frac12^T=\inf \left\{\int_{-T}^T W(u) dt + \int_{-T}^T\int_{-T}^T\frac{\abs{u(x)-u(y)}^2}{\abs{x-y}^{2}}\,dx\,dy\mid v\in\mathcal H_T^\frac12\right\}.\]
            For any function $v\in\mathcal{H}_T^s$, we fix $\eta\in(0,\frac14)$ and consider \[A=\{v>1-\eta\}\cap[-T,T],\quad B=\{v<\eta-1\}\cap[-T,T],\] 
            \[a=\frac{\abs{A}}{2T},\quad b=\frac{\abs{B}}{2T},\quad C_\eta=\min_{\abs{z}\leq 1-\eta}W(z).\]
            Following the techniques used in Section \ref{sec:scaling}, we have:
            \[\int_{-T}^T W(v) dt\geq C_\eta2T (1-a-b)\]
            and
            \[ \int_{-T}^T\int_{-T}^T\frac{\abs{v(x)-v(y)}^2}{\abs{x-y}^{2}}\,dx\,dy\geq2\iint_{A\times B}\frac{\abs{v(x)-v(y)}^2}{\abs{x-y}^{2}}\,dx\,dy\]
            \[\geq 8(1-\eta)^2\iint_{A\times B}\frac1{\abs{x-y}^{2}}\,dx\,dy
            \geq 8(1-\eta)^2\int_{-T}^{-T+\abs{B}}\int_{T-\abs{A}}^T\frac1{\abs{x-y}^{2}}\,dx\,dy\]
            \[\begin{split}
                =8(1-\eta)^2 [\log(2T-\abs{A})+&\log(2T-\abs{B})\\
                &\quad-\log(2T)-\log(2T-\abs{A}-\abs{B}) ]
            \end{split}\]
            \[=8(1-\eta)^2\left[\log(1-a)+\log(1-b)-\log(1-a-b) \right].\]
            Hence
            \[\int_{-T}^T W(v) dt +\int_{-T}^T\int_{-T}^T\frac{\abs{v(x)-v(y)}^2}{\abs{x-y}^{2}}\,dx\,dy\]
            \[\begin{split}
            \geq
            C_\eta2T (1-a-b) + 8(1-\eta)&^2[\log(1-a)+\\ &\quad \log(1-b)-\log(1-a-b) ]
            \end{split}\]
            \[
            \begin{split}
                =
                8(1-\eta)^2&
                \bigg\{\frac{C_\eta}{8(1-\eta)^2}2T (1-a-b)\\ 
                &+ \left[\log(1-a)+\log(1-b)-\log(1-a-b) \right]\bigg\}.
            \end{split}
            \]
            This, using the inequality
            \[-\log x+Mx\geq \log M\]
            with 
            \[
            x=(1-a-b)\quad\text{and}\quad M=\frac{C_\eta}{8(1-\eta)^2}2T,
            \]
            we obtain 
            \[
            \begin{split} 
                \int_{-T}^T W(v) dt +&\int_{-T}^T\int_{-T}^T\frac{\abs{v(x)-v(y)}^2}{\abs{x-y}^{2}}\,dx\,dy\\
                \geq 
                &8(1-\eta)^2\left\{\log\left(\frac{C_\eta}{8(1-\eta)^2}2T\right)+ \log(1-a)+\log(1-b)\right\}.
            \end{split}
            \]
            In conclusion, we have
            \begin{gather*}
            \frac{1}{\log(2T)}\left(\int_{-T}^T W(v) dt + \int_{-T}^T\int_{-T}^T\frac{\abs{v(x)-v(y)}^2}{\abs{x-y}^{2}}\,dx\,dy\right)\\
            \geq \frac{\tilde C(\eta)}{\log{(2T)}}+8(1-\eta)^2.
            \end{gather*}
            Hence, taking the limit as $T\to+\infty$ and then letting $\eta\to0^+$, we obtain
            \[\lim_{T_\to+\infty}\tilde m^T_\frac12\geq8.\]
            For the opposite inequality, we consider the test function $u(t)=\mathcal{T}_1(t)$. Indeed, for the potential term, we have
            \[\int_{-T}^T W(u)\,dt=\int_{-1}^1 W(t)\,dt.\]
            On the other hand, regarding the Gagliardo seminorm
            \[\int_{-T}^T\int_{-T}^T\frac{\abs{u(x)-u(y)}^2}{\abs{x-y}^{2}}\,dx\,dy,\]
            we split the integral as in the proof of Lemma \ref{limsup1}, obtaining
            \[
            \int_{-1}^1\int_{-1}^1\frac{\abs{x-y}^2}{\abs{x-y}^{2}}\,dx\,dy+4\int_{1}^T\int_{-1}^1 \frac{\abs{x-1}^2}{\abs{x-y}^{2}}\,dx\,dy+2\int_{1}^T\int_{1}^T \frac{4}{(x+y)^{2}}\,dx\,dy
            \]
            which is equal to
            \[
            C+8[2\log(1+T)-\log(2T)-\log(2)].
            \]
            Hence
            \[
            \frac{1}{\log(2T)}\left(\int_{-T}^T W(u) dt + \int_{-T}^T\int_{-T}^T\frac{\abs{u(x)-u(y)}^2}{\abs{x-y}^{2}}\,dx\,dy\right)= 8+O\left(\frac{1}{\log(2T)}\right).
            \]
            Taking the limit as $T\to+\infty$ concludes the proof.
        \end{proof}
        
		\begin{lemma}\label{Limite_ms}
			Given $m_s$ as in \eqref{ms:def}, one has
			\begin{equation}\label{ms_esplode}
				\lim_{s\to{\frac12}^+}(2s-1) m_s= m_{\frac12}.
			\end{equation}
		\end{lemma}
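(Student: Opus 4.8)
The plan is to establish the limit by obtaining matching upper and lower bounds for $(2s-1)m_s$ as $s\to\frac12^+$, exploiting the characterization \eqref{eq:opt_s} of $m_s$ via the truncated problems on $\mathcal{H}_T^s$ together with the analogous rescaled characterization \eqref{eq:opt12} of $m_{\frac12}$ just proved. The heuristic is that the seminorm kernel $|x-y|^{-1-2s}$ behaves, for $s$ close to $\frac12$, like $|x-y|^{-2}$ weighted by an extra factor whose integral over an interval of length $\sim T$ produces $\frac{T^{1-2s}-1}{2s-1}\approx\frac{1}{2s-1}$ in place of $\log(2T)$; so $(2s-1)m_s$ should converge to the same constant $8$ that the logarithmically rescaled $s=\frac12$ problem produces.

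First I would prove the lower bound $\liminf_{s\to\frac12^+}(2s-1)m_s\ge m_{\frac12}$. Fix $\eta\in(0,\frac14)$ and apply to any competitor $v$ (with $v(\pm\infty)=\pm1$) the same two-set estimate used in Section \ref{sec:scaling} and in the previous lemma: with $A=\{v>1-\eta\}$, $B=\{v<\eta-1\}$, bound the potential term below by $C_\eta\,|\mathbb{R}\setminus(A\cup B)|$ and the seminorm below by $8(1-\eta)^2\iint_{A\times B}|x-y|^{-1-2s}\,dx\,dy$, then push $A$ and $B$ apart by monotonicity of the kernel. One is then reduced to a finite-dimensional minimization exactly as in \eqref{CalcoloScaling+}: optimizing in the measure $\sigma$ of the transition region gives $\sigma\sim$ const, and the leading term of the seminorm integral is $\frac{1}{2s}\cdot\frac{\sigma^{1-2s}-1}{2s-1}$, whose product with $(2s-1)$ tends to $8(1-\eta)^2$ as $s\to\frac12^+$ (since $\sigma^{1-2s}\to1$). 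Letting $\eta\to0$ gives the claimed lower bound; care must be taken that all error terms are $O(1)$ uniformly in $s$ near $\frac12$, so that they vanish after multiplication by $(2s-1)$.

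Next I would prove the upper bound $\limsup_{s\to\frac12^+}(2s-1)m_s\le m_{\frac12}$ by an explicit construction: use the truncated profile $u_T=\mathcal{T}_1(t)$ (linear on $[-1,1]$, then $\pm1$), extended by $\pm1$ outside $[-T,T]$, and compute its energy for the exponent $s$. The potential contributes $\int_{-1}^1 W$, independent of $s$ and $T$. For the seminorm, split the square $(-T,T)^2$ as in Lemma \ref{limsup1}; the dominant piece is the interaction $\iint_{[1,T]\times[-T,-1]}\frac{4}{|x-y|^{1+2s}}$, which evaluates to $\frac{C}{2s(2s-1)}\bigl((2T-2)^{1-2s}-(\text{lower order})\bigr)$, i.e. of order $\frac{1}{2s-1}$ up to an $O(1)$ remainder, while all other pieces stay bounded. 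Hence the whole $s$-energy of $u_T$ is $\frac{8}{2s-1}+O(1)$ uniformly, so for each fixed large $T$, $(2s-1)m_s\le(2s-1)\cdot(\text{energy of }u_T)\to8$ as $s\to\frac12^+$; combined with the lower bound this gives the result. (Alternatively, one can feed into \eqref{eq:opt_s} a near-optimizer $v_T$ of the rescaled $s=\frac12$ problem \eqref{eq:opt12}, estimate the difference $\iint v_T(|x-y|^{-1-2s}-|x-y|^{-2})$, and pass to the limit; but the explicit profile is cleaner.)

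The main obstacle is making the asymptotics uniform: one must control the $O(1)$ error terms in both bounds so that they are genuinely bounded independently of $s$ in a neighbourhood of $\frac12$ (the constants $K^{1-2s}$, the tail integrals $\int^{\infty}|x-y|^{-1-2s}$, and the contributions away from the transition layer all depend on $s$), and likewise the interchange of the limits $T\to\infty$ and $s\to\frac12^+$ must be justified — in the lower bound this is automatic since the estimate holds for every competitor, while in the upper bound one commits to a fixed $T$ before sending $s\to\frac12^+$ and only afterwards lets $T\to\infty$.
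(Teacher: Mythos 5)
Your architecture is exactly the paper's: a lower bound via the two-set estimate $C_\eta\delta+8(1-\eta)^2\iint_{A\times B}|x-y|^{-1-2s}\,dx\,dy$ optimized over the measure $\delta$ of the transition layer, and an upper bound via the explicit profile $\mathcal{T}_1$. However, as written, the key formulas in both bounds are the \emph{bounded-domain} versions, in which the terms of order $\frac{1}{2s-1}$ cancel, and they would produce the limit $0$ rather than $8$. In the lower bound you give the leading term as $\frac{1}{2s}\cdot\frac{\sigma^{1-2s}-1}{2s-1}$ (copied from \eqref{CalcoloScaling+}); since the optimal $\sigma$ is a constant, $\sigma^{1-2s}-1\to 0$, so the product of your expression with $(2s-1)$ tends to $0$, not to $8(1-\eta)^2$. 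The correct whole-line estimate has no subtracted constant: pushing $A$ and $B$ to half-lines separated by a gap of length $\delta$ gives $\int_{\delta/2}^{\infty}\int_{\delta/2}^{\infty}(x+y)^{-1-2s}\,dx\,dy=\frac{\delta^{1-2s}}{2s(2s-1)}$, and it is this uncancelled $\frac{1}{2s-1}$, produced by integrating out to infinity, that survives multiplication by $2s-1$. This is how the paper argues; the unboundedness of the domain is not a technicality but the source of the divergence.

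The same issue is fatal to the upper bound in the order of limits you propose. For $\mathcal{T}_1$, the interaction restricted to $(-T,T)^2$ equals $\frac{4}{2s(2s-1)}\left(2^{1-2s}-2(T+1)^{1-2s}+(2T)^{1-2s}\right)$ plus bounded terms; the bracket vanishes to first order in $2s-1$, so for fixed $T$ the truncated energy stays bounded as $s\to\frac12^+$ and $(2s-1)\times(\text{truncated energy})\to 0$. The whole divergence $\frac{8}{2s-1}$ of the full energy lives in the tail interactions at distances $\gtrsim T$, which is precisely what "commit to a fixed $T$, send $s\to\frac12^+$, then $T\to\infty$" discards. The remedy is to drop $T$ altogether, as the paper does: $\mathcal{T}_1$ is already an admissible competitor for $m_s$ on $\mathbb{R}$, its full energy is $\frac{8\cdot 2^{1-2s}}{2s(2s-1)}+O(1)$ uniformly for $s$ near $\frac12$ (the dominant piece being $8\int_1^{+\infty}\int_1^{+\infty}(x+y)^{-1-2s}\,dx\,dy$), and this gives $\limsup_{s\to\frac12^+}(2s-1)m_s\le 8$ directly, with no interchange of limits to justify. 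With these two corrections your proof coincides with the paper's.
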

		\begin{proof}
			Let $v_s\in H^{\frac12}(\R)$ and $T>0$ such that $v_s(\pm T)=\pm1$ and $v_s$ is constant outside of $[-T,T]$.
            For any $\eta\in(0,\frac14)$, define: $$\delta=\delta_s(\eta)=\abs{\{\abs{v_s}\leq 1-\eta\}},$$
            $$A_\eta=\{v_s\geq 1-\eta\}\quad B_\eta=\{v_s\leq \eta-1\}.$$
            The energy $$F^s_1(v_s,\R)=\int_{\R} W(v_s) dt + \semin{u}{s}(\R)^2$$ satisfies:
            $$F^s_1(v_s,\R)\geq C_\eta \delta+2\iint_{A_\eta\times B_\eta} \frac{\abs{v_s(x)-v_s(y)}^2}{\abs{x-y}^{1+2s}}\,dx\,dy$$
            $$\geq C_\eta \delta+8(1-\eta)^2\iint_{A_\eta\times B_\eta} \frac{1}{\abs{x-y}^{1+2s}}\,dx\,dy.$$
            Since $\abs{\R\setminus(A_\eta\cup B_\eta)}=\delta$, translation invariance implies
            \begin{gather*}
                F^s_1(v_s,\R)\geq C_\eta \delta+8(1-\eta)^2\int_{\frac\delta2}^{+\infty} \int_{\frac\delta2}^{+\infty}\frac{1}{({x+y})^{1+2s}}\,dx\,dy\\
                \geq 
                C_\eta \delta+8(1-\eta)^2\frac{\delta^{1-2s}}{2s(2s-1)}.
            \end{gather*}
            Taking the infimum over all possible values of $\delta$, which is attained for \hbox{$\delta=\left(\frac{8(1-\eta)^2}{2sC_\eta}\right)^\frac{1}{2s}$}, yields
            $$F^s_1(v_s,\R)\geq C_\eta \left(\frac{8(1-\eta)^2}{2sC_\eta}\right)^\frac{1}{2s}+ \frac{1}{2s}\left(\frac{8(1-\eta)^2}{2sC_\eta}\right)^\frac{1-2s}{2s}(1-\eta)^2\frac{8}{2s-1}.$$
            Taking the infimum over all choices of $v_s$ and then multiplying both sides by $2s-1$ leads to
            $$(2s-1)m_s\geq (2s-1)C_\eta \left(\frac{8(1-\eta)^2}{2sC_\eta}\right)^\frac{1}{2s}+ \frac{1}{2s}\left(\frac{8(1-\eta)^2}{2sC_\eta}\right)^\frac{1-2s}{2s}(1-\eta)^28.$$
            We now take the limit as $s\to\frac12^+$ and then as $\eta\to0^+$ to conclude:
            $$\lim_{s\to{\frac12}^+}(2s-1) m_s\geq m_{\frac12}.$$

            \noindent We now prove the converse inequality: let $u(x)=\mathcal{T}_1(x)$. Then
            $$F_1^s(u,\R)= \int_1^1 W(x)\,dt+8\int_1^{+\infty}\int_1^{+\infty}\frac{1}{(x+y)^{1+2s}}\,dx\,dy$$
            $$+\iint_{[-1,1]^2}\abs{x-y}^{1-2s}\,dx\,dy+\int_{-1}^1\abs{x-1}^2\int_1^{+\infty}\frac{1}{\abs{x-y}^{1+2s}}dydx.$$
            Since, as $s\to\frac12^+$, one has
            \[
            \begin{split}
                \int_1^1 W(x)\,dx+&\iint_{[-1,1]^2}\abs{x-y}^{1-2s}\,dx\,dy\\
                +& \int_{-1}^1\abs{x-1}^2\int_1^{+\infty}\frac{1}{\abs{x-y}^{1+2s}}dydx=O(1),
            \end{split}
            \]
            the leading term is 
            $$8\int_1^{+\infty}\int_1^{+\infty}\frac{1}{(x+y)^{1+2s}}\,dx\,dy=8\frac{2^{\frac{1}{2s}}}{2s(2s-1)}.$$
            Hence we have
            $$m_s\leq F_1^s(u,\R)=8\frac{2^{\frac{1}{2s}}}{2s(2s-1)}+O(1),$$
            which implies
            $$\lim_{s\to{\frac12}^+}(2s-1) m_s\leq m_{\frac12}.$$
            This concludes the proof.
		\end{proof}

\section{Continuity and regular points}\label{sec:gammaexp}

    Consider the scaling factor, which is continuous in both variables
    \[\lambda(\varepsilon,s)=
        \begin{cases}
             \frac1{\abs{\log{\varepsilon}}}\quad s=\frac12\\
             \frac{1-2s}{\varepsilon^{(2s-1)}-1}\quad s\in(\frac12,1)
        \end{cases}
    \]
    and the family of functionals
    
    \[\mathcal{F}_\varepsilon^s(u)=\lambda(\varepsilon,s)\left(\frac1{\varepsilon}\int W(u)\,dt+\varepsilon^{2s-1}\semin{u}{s}^2\right)\quad s\in\left[\frac12,1\right).\]

    \noindent We can reinterpret our previous results through the lens of regular values introduced in \cite{BT-GammaExp} (see also \cite{Braides-Sigalotti} for a similar result in the context of periodically-perforated media). Namely, for any $s_0\in\left[\frac12,1\right)$ and any pair of sequences $(s_j,\varepsilon_j)\to(s_0,0)$, $(s_j^\prime,\varepsilon_j^\prime)\to(s_0,0)$, we have
    \[\Gamma\hbox{-}\lim_{j\to+\infty} \mathcal{F}_{\varepsilon_j}^{s_j} = 
    \Gamma\hbox{-}\lim_{j\to+\infty} \mathcal{F}_{\varepsilon_j^\prime}^{s_j^\prime}.
    \]
    Moreover, we point out that 
    \[\Gamma\hbox{-}\lim_{\varepsilon\to0} \mathcal{F}_\varepsilon^s(u) =\begin{cases}
         m_\frac12 F_0(u)&\hbox{ if }  s=\frac12\\
         (2s-1)m_s F_0(u)&\hbox{ if }  s\in\left(\frac12,1\right),
    \end{cases}\]
where
\[
F_0 (u)=
\begin{cases}
    \#S(u) & \text{if } u \in \BV, \\
    +\infty & \text{otherwise},
\end{cases}
\]
and $\#S(u)$ denotes the number of jump points of $u$ in $\BV$.
    Notably, the regularity of the point $s=\frac12$ and the continuity of the $\Gamma$-limits with respect to $s\geq\frac12$ only occur thanks to the presence of the scaling factor $\lambda(\varepsilon,s)$. 
This highlights another separation of scales. Specifically, in the regime
\[\abs{2s_\e-1}>>\frac1{|\log\e|},\]
the  $\Gamma$-limit of $\lambda(\varepsilon,s) F_\varepsilon^{s}$ coincides with the one obtained by first taking $\e\to 0$ with $s> \frac12$ fixed, yielding  $(2s-1){m_s}\#S(u)$, and then letting $s\to \frac12$. 
    
In the terminology of $\Gamma$-expansions (cf. \cite{BT-GammaExp}) we can state that 
$\int W(u)\,dt+\varepsilon^{2s}\semin{u}{s}^2$ is uniformly equivalent to 
    \[\begin{cases}
        \displaystyle\e|\log\e| m_\frac12\#S(u)\quad\hbox{ if } s=\frac12\\
        \\
        \displaystyle\e\big(1-\e^{2s-1}\big)m_s\#S(u)\quad\hbox{ if } s\in\left(\frac12,1\right)
    \end{cases}\]
for $s$ varying in compact sets of $[\frac12,1)$.

\section*{Appendix}

\begin{proof}[Proof of Lemma \ref{KeyLemma}]
    Let us define the sets
            \begin{equation}
                A={I\cap\{v\geq 1-\eta\}}\quad \text{and}\quad
                B={I\cap\{v\leq\eta-1\}}
            \end{equation}
    here, we omit the dependence on $\eta$ and $\e$ to simplify the notation. For the same reason, we will write $s$ instead of $s_\e$, as the dependence of $s_\varepsilon$ on $\varepsilon$ will not be used in this proof.
    Moreover, we define
    \begin{equation}
        \alpha=\frac{\abs{A}}{\abs{I}}\quad \text{and}\quad\beta=\frac{\abs{B}}{\abs{I}}.
    \end{equation}
    Note that
    \begin{equation}\label{alphabeta}
        \theta<\alpha,\beta<1-\theta.
    \end{equation}

    In the next steps, we compute a lower bound for the Gagliardo seminorm.
    \begin{equation}
        \semin{v}{s}(I)^2
        \geq 
        2\int_A\int_B\frac{\abs{u(x)-u(y)}^2}{\abs{x-y}^{1+2s}}\,dx\,dy
        \geq 
        2(2-\eta)^2\int_A\int_B\frac{1}{\abs{x-y}^{1+2s}}\,dx\,dy.
    \end{equation}
    Using the notation $I=(a,b)$ we get, by monotonicity of the function $\psi(\abs{x-y})=\frac{1}{\abs{x-y}^{1+2s_\e}}$ in $\abs{x-y}$,
    \begin{equation}
        \semin{v}{s}(I)^2
        \geq
        2(2-\eta)^2 \int_a^{a+\abs{A}}\int_{b-\abs{B}}^b\frac{1}{\abs{x-y}^{1+2s}}\,dx\,dy.
    \end{equation}
    Here, the right-hand side is equal to
    \begin{equation}
         \frac{2(2-\eta)^2}{2s(2s-1)}\Biggr((b-\abs{B}-a-\abs{A})^{1-2s}-(b-a-\abs{A})^{1-2s}-(b-\abs{B}-a)^{1-2s}+(b-a)^{1-2s}\Biggr),
    \end{equation}
    which is
    \begin{equation}
         \frac{2(2-\eta)^2\abs{I}^{1-2s}}{2s(2s-1)}\Biggr((1-\beta-\alpha)^{1-2s}-(1-\alpha)^{1-2s}-(1-\beta)^{1-2s}+1\Biggr).
    \end{equation}
    Now we let $\sigma=1-\beta-\alpha$ and rearrange the terms to get
    \begin{equation}\label{App2}
         \frac{2(2-\eta)^2\abs{I}^{1-2s}}{2s}
         \Biggr(
         \frac{\sigma^{1-2s}-1}{(2s-1)}
         -\frac{(1-\alpha)^{1-2s}-1}{(2s-1)}
         -\frac{(1-\beta)^{1-2s}-1}{(2s-1)}\Biggr).
    \end{equation}
    Note that the function
    \[t\longmapsto -\frac{(1-t)^{1-2s}-1}{(2s-1)}\]
    is monotonically decreasing, which implies that
    \[\frac{\theta^{1-2s}-1}{(2s-1)}\geq\frac{(1-\alpha)^{1-2s}-1}{(2s-1)}.\]
    Hence, by \eqref{alphabeta}, we have
    \[-\frac{(1-\alpha)^{1-2s}-1}{(2s-1)}\geq-\frac{\theta^{1-2s}-1}{(2s-1)}.\]
    Recall that, for $\theta\in(0,1)$, the function
    \[s\longmapsto -\frac{\theta^{1-2s}-1}{(2s-1)}\]
    is monotonically decreasing in $(0,1)\setminus\{\frac{1}{2}\}$, which implies
    \[-\frac{(1-\alpha)^{1-2s}-1}{(2s-1)}\geq -\frac{\theta^{1-2s}-1}{(2s-1)} \geq-\frac{1-\theta}{\theta}.\]
    Since the same holds true if we substitute $\alpha$ with $\beta$, by \eqref{App2} we get
    \begin{equation}\label{App3}
        \semin{v}{s}(I)^2
        \geq
         \frac{2(2-\eta)^2\abs{I}^{1-2s}}{2s}
         \Biggr(
         \frac{\sigma^{1-2s}-1}{(2s-1)}-2\frac{1-\theta}{\theta}\Biggr).
    \end{equation}
    
    We now restrict ourselves to the case $s>\frac12$, the other being identical. By \eqref{App3}, we have
    \begin{equation}
        F_\e(v;I)\geq \lambda_+(\e)\Biggr[\frac{1}{\e}\sigma \abs{I}C_\eta + 
         \frac{2(2-\eta)^2\abs{I}^{1-2s}}{2s}
         \Biggr(
         \frac{\sigma^{1-2s}-1}{(2s-1)}-2\frac{1-\theta}{\theta}\Biggr)\Biggr].
    \end{equation}
    We now minimize the right-hand side with respect to $\sigma$, which leads to
    \[
    \frac{1}{\e}\sigma \abs{I}C_\eta =
         \frac{2(2-\eta)^2\abs{I}^{1-2s}}{2s}\sigma^{-2s},
    \]
    that is,
    \[\sigma=Z\e,\quad\text{with}\quad Z=\left(\frac{2(2-\eta)^2\abs{I}^{-2s}}{2sC_\eta}\right)^\frac{1}{2s}.\]
    Choosing such value for $\sigma$, we get \eqref{eq:keylemma}.
\end{proof}

\section*{Acknowledgments}
I would like to thank Andrea Braides for his supervision and insightful guidance. I also wish to thank the anonymous referee for their valuable comments and suggestions, which helped improve the clarity of this work.\\
I am a member of the GNAMPA group of INdAM. I have no conflicts of interest.

\newpage

\end{document}